\def\newthm#1#2{\newtheorem{#1}[dummy]{#2}%
  \expandafter\def\csname#2\endcsname##1{\hyperref[#1:##1]{#2~\ref*{#1:##1}}}}
\theoremstyle{definition}
\newtheorem*{theorem2*}{Theorem 2}
\newtheorem*{theorem3*}{Theorem 3}
\def\namedlabel#1#2{\begingroup
    #2%
    \def\@currentlabel{#2}%
    \phantomsection\label{#1}\endgroup
}
\newcommand{\Section}[1]{\hyperref[sec:#1]{Section~\ref*{sec:#1}}}
\newcommand{\Table}[1]{\hyperref[tab:#1]{Table~\ref*{tab:#1}}}
\newcommand{\eqn}[1]{\hyperref[eqn:#1]{(\ref*{eqn:#1})}}
\newcommand{\Figure}[1]{\hyperref[fig:#1]{Figure~\ref*{fig:#1}}}
\DeclareMathOperator{\Gr}{Gr}
\DeclareMathOperator{\QK}{QK}
\DeclareMathOperator{\K}{K}
\DeclareMathOperator{\Row}{rows}
\DeclareMathOperator{\Col}{cols}
\DeclareMathOperator{\ch}{\ch}
\newcommand{\Oh}{\mathcal{O}}
\newcommand{\C}{{\mathbb C}}
\newcommand{\Z}{{\mathbb Z}}
\newcommand{\N}{{\mathbb N}}
\newcommand{\al}{{\alpha}}
\newcommand{\be}{{\beta}}
\newcommand{\la}{{\lambda}}
\newcommand{\hk}[2]{\mathcal{O}^{(#1 \backslash #2)}}
\newcommand{\hkab}{{(a \backslash b)}}
\newcommand{\ignore}[1]{}
\begin{document}

\title
[Hook Multiplication in the Quantum K-Theory of Grassmannians]
{Hook Multiplication in the Quantum K-Theory of Grassmannians}

\author{Joy Hamlin}
\address{Department of Mathematics, Rutgers University, 110
    Frelinghuysen Road, Piscataway, NJ 08854, USA}
\email{joy.hamlin@rutgers.edu}

\begin{abstract}
    We study the quantum K-theory ring $\QK(X)$ of a Grassmannian $X$ and prove a manifestly positive formula for the product of an arbitrary class by a hook class.  This generalizes the quantum K-theoretic Pieri rule, a prior result of Buch and Mihalcea.  We also present a combinatorial interpretation of this result.
\end{abstract}

\maketitle

\section{Introduction}

Let $X = \Gr(m,n)$ be the Grassmannian of $m$ dimensional subspaces of $\C^n$.  Let $\K(X)$ be the K-theory ring of $X$, i.e. the ring generated by classes of vector bundles over $X$ modulo short exact sequences.  The quantum K-theory ring $\widehat{\QK}(X)$ was introduced by Givental
and Lee \cite{givental_wdvv,givental_lee_qktheory,lee_qktheory}.  It is a deformation of $\K(X)$, isomorphic to $\K(X)\otimes_\Z\Z[[q]]$ as a $\Z[[q]]$-module, but with a different multiplicative structure.  For $X^\la$ a Schubert variety in $X$, we denote by $\Oh^\la$ the K-theory class of the structure sheaf of $X^\la$, and the set of all $\Oh^\la$ is a basis for $K(X)$.  The quantum K-theoretic structure constants $N_{\la,\mu}^{\nu,d}$ of $\widehat{\QK}(X)$, which satisfy $\Oh^\la\Oh^\mu=\sum\limits_{\nu,d\geq0}N_{\la,\mu}^{\nu,d}q^d\Oh^\nu$, where $\la,\mu,\nu$ are Young diagrams, are defined from the three point, genus zero, K-theoretic Gromov-Witten invariants of $X$ \cite{buch_qkgrass}.

These structure constants are the object of interest of this paper.  Each $N_{\la,\mu}^{\nu,d}$ is an integer, and their signs satisfy $(-1)^{|\nu|-|\la|-|\mu|-dn}N_{\la,\mu}^{\nu,d}\geq0$ \cite{buch_qkpos}.  However, no manifestly positive combinatorial formula for them is known.  What is known are a Littlewood-Richardson rule for multiplication in K-theory \cite{buch_gamma} and a Pieri rule for multiplication by row or column classes in quantum K-theory \cite{buch_qkgrass}.

This paper proves theorems that generalize both the row and column versions of the quantum K-theory Pieri rules.  To state these theorems, we must set up some notations and definitions.  Let $\hkab$ represent the Young diagram in the shape of a hook with $a+1$ boxes down the first column and $b+1$ boxes across the first row, so that $\hkab$ contains $a+b+1$ boxes total.  Also let $\rho_t$ be the staircase shape $(t-1,t-2,...,1,0)$ in a $t$ by $t$ rectangle, with examples of both $\hkab$ and $\rho_t$ shown in Figure \ref{fig:hkabandrhot}.  Furthermore, write $C_{m,n}(\la,a,b)=N_{\la,\hkab}^{\la,1}$, where this notation emphasizes that $X=\Gr(m,n)$.  If $a<0$ or $b<0$, $\hkab$ is not a valid Young diagram, so we define $C_{m,n}(\la,a,b)$ to be 0 in this case.  Finally, define the number of \textit{quantum corners} of $\la$ to be the maximum of the number of corners of $\la$ and the number of corners of its dual $\la^\vee$, where a corner of a Young diagram is a box with no boxes to its south or east, as shown in Figure \ref{fig:quantumcorners}.  Now we can state:

\begin{figure}[]
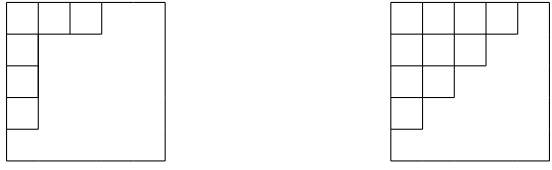

	\[ \tableau{12}
	{	[tlbr]& [tr]& [tr]& [t]& [tr]\\
		[lbr]& [lt]& [t]& []& [r]\\
		[lbr]& [l]& []& []& [r]\\
		[lbr]& []& []& []& [r]\\
		[lb]& [b]& [b]& [b]& [br]
	}
	\hspace{3cm}	
	\tableau{12}
	{	[tlbr]& [tbr]& [tbr]& [tbr]& [tr]\\
		[lbr]& [br]& [br]& []& [r]\\
		[lbr]& [br]& []& []& [r]\\
		[lbr]& []& []& []& [r]\\
		[lb]& [b]& [b]& [b]& [br]
	}\]
	
	\caption{On the left, we see the hook shape $(3\backslash2)$, and on the right, we see the staircase shape $\rho_5$.}
	\label{fig:hkabandrhot}	
\end{figure}

\begin{figure}[]
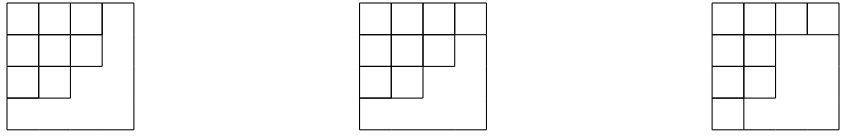

	\[ \tableau{12}
	{	[tlbr]& [tlbr]& [tlbr]& [tlr]\\
		[tlbr]& [tlbr]& [tlbr]& [r]\\
		[tlbr]& [tlbr]& []& [r]\\
		[tlb]& [b]& [b]& [br]
	}
	\hspace{3cm}	
	\tableau{12}
	{	[tlbr]& [tlbr]& [tlbr]& [tlbr]\\
		[tlbr]& [tlbr]& [tlbr]& [r]\\
		[tlbr]& [tlbr]& []& [r]\\
		[tlb]& [b]& [b]& [br]
	}
	\hspace{3cm}	
	\tableau{12}
	{	[tlbr]& [tlbr]& [tlbr]& [tlbr]\\
		[tlbr]& [tlbr]& []& [r]\\
		[tlbr]& [tlbr]& []& [r]\\
		[tlbr]& [b]& [b]& [br]
	}\]
	
	\caption{Each of these diagrams has 3 quantum corners.  On the left, $\la_1=(3,3,2,0)$ has 2 corners and $\la_1^\vee=(4,2,1,1)$ has 3 corners.  In the middle, $\la_2=(4,3,2,0)$ has 3 corners, as does $\la_2^\vee=(4,2,1,0)$.  On the right, $\la_3=(4,2,2,1)$ has 3 corners and $\la_3^\vee=(3,2,2,0)$ has 2 corners.}
	\label{fig:quantumcorners}	
\end{figure}

\begin{thm}\label{thm:main0}
	If $N_{\la,\hkab}^{\nu,d}\neq0$, then either $d=0$, or both $d=1$ and $\nu\subseteq\la$.  Furthermore, if $N_{\la,\hkab}^{\nu,d}\neq0$ and $(\nu,d)\neq(\la,1)$, then $N_{\la,\hkab}^{\nu,d}$ is equal to an explicitly determined structure constant of $\K(X)$.
\end{thm}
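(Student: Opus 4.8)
The plan is to reduce multiplication by the hook $\hkab$ to iterated multiplication by row classes $\Oh^{(r)}$ and column classes $\Oh^{(1^c)}$, for which the quantum $\K$ Pieri rule of Buch--Mihalcea already provides manifestly positive formulas supported in degrees $d\le 1$. Classically the hook Schur function satisfies the Jacobi--Trudi identity $s_{\hkab}=\sum_{i\ge 0}(-1)^i e_{a+1+i}h_{b-i}$, so I would first establish a $\K$-theoretic refinement expressing $\Oh^{\hkab}$ as an explicit integral combination of products $\Oh^{(1^{c})}\cdot\Oh^{(r)}$. Rather than guess the coefficients, I would obtain this expansion by inverting the triangular system produced by the row and column Pieri rules; since $\QK(X)\cong\K(X)\otimes_\Z\Z[[q]]$ as a $\Z[[q]]$-module and the special classes generate the ring, the same combination computes $\Oh^\la\cdot\Oh^{\hkab}$ once the quantum corrections are tracked.

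For the degree bound, note that multiplying $\Oh^\la$ first by a row class and then (via associativity) by a column class contributes only $q^0$ and $q^1$ at each step, so a priori the product has degree at most $2$. The crucial step is to show that the $q^2$ contributions cancel in the alternating sum, leaving $d\le 1$. I would do this by writing out the degree-$2$ part of each summand using the explicit Pieri $d=1$ rule and verifying that it telescopes to zero, exactly mirroring the classical telescoping that isolates $s_{\hkab}$ from the terms $e_{a+1+i}h_{b-i}$. This simultaneously shows that every surviving $q^1$ term arises from applying exactly one quantum Pieri step and one classical step.

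The containment $\nu\subseteq\la$ then follows from the shape of the Pieri $d=1$ rule: the quantum step deletes the entire first column before adding a horizontal strip, and the remaining classical $\K$-theory multiplication only rearranges boxes inside the $m\times(n-m)$ rectangle, so the output never leaves $\la$. Collecting the degree-$1$ contributions expresses each nonzero $N_{\la,\hkab}^{\nu,1}$ with $\nu\ne\la$ as a signed sum of $\K$-theoretic Littlewood--Richardson coefficients; I would then show this sum collapses to a single, explicitly determined structure constant of $\K(X)$, using the positivity inequality $(-1)^{|\nu|-|\la|-(a+b+1)-n}N_{\la,\hkab}^{\nu,1}\ge 0$ to pin down the sign and force the remaining cancellations. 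The case $(\nu,d)=(\la,1)$ is precisely the one in which this collapse fails and the genuinely quantum number $C_{m,n}(\la,a,b)$ survives, which is why it is excluded; the $d=0$ terms are classical $\K$-theory and need no further argument.

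The main obstacle is controlling the quantum corrections throughout: the quantum $\K$ product is not graded and a priori permits unbounded degree, so both the module identity for $\Oh^{\hkab}$ and the $q^2$ cancellation require care. I expect the heart of the argument to be the telescoping of the degree-$2$ terms and the collapse of the signed classical sum to a single positive coefficient, and here I would lean on associativity of $\QK(X)$ together with positivity. As a cross-check and backup for the degree bound, I would also reinterpret $N_{\la,\hkab}^{\nu,d}$ via the quantum-equals-classical correspondence on the two-step flag variety $\Fl(m-d,m+d;n)$, where the confinement of $\hkab$ to a single row and column should force the relevant Gromov--Witten locus to be empty for $d\ge 2$. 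A final technical point is handling the boundary cases where $\hkab$ fails to fit in the $m\times(n-m)$ rectangle, consistent with the convention $C_{m,n}(\la,a,b)=0$.
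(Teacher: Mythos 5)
Your first step---expanding $\Oh^{\hkab}$ as an integral combination of products of row and column classes by inverting the Pieri recursion---is essentially the paper's Lemma~\ref{lemma:lincombo}, and that part is sound. But your degree argument misidentifies the mechanism. There is no cancellation of $q^2$ terms across the alternating sum, because no individual product $\Oh^\la\Oh^{1^i}\Oh^j$ has a $q^2$ term in the first place: every term in such a product is obtained by adding a vertical strip and then a horizontal strip to $\la$ (in the quantum poset, where the Pieri rule takes the same form in every degree), and such a union contains no $2\times 2$ square, hence is contained in $\la[1]$---and containment in $\la[1]$ \emph{is} the statement $d\le 1$. So the telescoping you plan to verify would, if carried out honestly, just reveal that each summand's degree-$2$ part vanishes identically; but proving that vanishing is exactly the strip/rim argument your outline lacks, and modeling it on the classical Jacobi--Trudi cancellation (which cancels unwanted \emph{classical} hook shapes, an unrelated phenomenon) will not supply it. Your heuristic for $\nu\subseteq\la$ when $d=1$ (``the quantum step deletes the first column and the classical multiplication only rearranges boxes'') is also incorrect as stated: classical Pieri multiplication adds boxes, and the shapes produced by the quantum part of one Pieri step can grow past $\la$ under a subsequent classical step unless one again invokes the rim constraint.

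The more serious gap is the second assertion of the theorem. You propose to write each $N_{\la,\hkab}^{\nu,d}$ with $(\nu,d)\neq(\la,1)$ as a signed sum of K-theoretic Littlewood--Richardson coefficients and then argue that positivity forces this sum to ``collapse'' to a single classical constant. Positivity constrains only the sign of the total; it provides no mechanism for identifying a signed combination of many LR numbers with one specific structure constant of $\K(X)$, and your outline never names which classical constant the answer should be. The paper's proof rests on an idea absent from your proposal: structure constants in the quantum poset are invariant under simultaneous translation of $\la$ and $\nu$ (Lemma~\ref{lemma:sidel}, proved by multiplying by the classes $\Oh^{1^m}$ and $\Oh^{n-m}$, which act by translating shapes). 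When $\nu\subsetneq\la[1]$, one translates a maximal box of $\la[1]$ not in $\nu$ to the position $(1,n-m+1)$; this makes the translates $\la'$ and $\nu'$ simultaneously classical, so $N_{\la,\hkab}^{\nu}=N_{\la',\hkab}^{\nu'}$ literally \emph{is} an explicit constant of $\K(X)$, with no cancellation to engineer. Without translation invariance or an equivalent device (your quantum-equals-classical remark on the two-step flag variety is offered only as a cross-check for the degree bound, not developed into an identification of the coefficient), this half of the theorem is out of reach for your argument.
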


The $q\Oh^\la$ term is the maximal term in the product $\Oh^\la\Oh^\hkab$, and is the only term whose coefficient is not given by Theorem \ref{thm:main0}.  Theorems \ref{thm:main1} and \ref{thm:main2} compute this coefficient.

\begin{thm}\label{thm:main1}
	Let $\la$ be a Young diagram with $t$ quantum corners, and let $0\leq a\leq m$, $0\leq b\leq n-m$.  Then there is a reduction:
	
	\[
		C_{m,n}(\lambda,a,b) = C_{t,2t}(\rho_t,a-m+t,b-n+m+t)
	\]
\end{thm}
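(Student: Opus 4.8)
The plan is to show that $C_{m,n}(\la,a,b)$ depends on the data $(m,n,\la,a,b)$ only through the three numbers $t$, $m-a$, and $(n-m)-b$, where $t$ is the number of quantum corners of $\la$. Granting this, the theorem is immediate: the target value $C_{t,2t}(\rho_t,a-m+t,b-n+m+t)$ has the same three invariants, since $\rho_t$ has exactly $t$ quantum corners, $t-(a-m+t)=m-a$, and $t-(b-n+m+t)=(n-m)-b$. To get a handle on $C_{m,n}(\la,a,b)=N_{\la,\hkab}^{\la,1}$ I would first trade it for a classical quantity. Being a degree-one K-theoretic Gromov--Witten invariant, it is computed by the \emph{quantum${}={}$classical} principle of Buch--Mihalcea on the variety of lines $\Fl(m-1,m+1;n)$: it equals a classical K-theoretic intersection number of the preimages, under the two projections to $X$, of the Schubert varieties indexed by $\la$, the hook $\hkab$, and the Poincar\'e dual $\la^\vee$. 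This is the model in which I would prove the claimed stabilization.

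Next I would introduce two elementary moves on this model and prove that each leaves $C$ unchanged up to a forced shift. The \emph{row move} inserts into (or deletes from) $\la$ a row equal to one of its neighbours, passing between $\Gr(m,n)$ and $\Gr(m+1,n+1)$ and sending $(a,b)\mapsto(a+1,b)$; the \emph{column move} inserts or deletes a repeated column, passing between $\Gr(m,n)$ and $\Gr(m,n+1)$ and sending $(a,b)\mapsto(a,b+1)$. A direct check on partitions shows that duplicating a row or a column changes neither the number of corners of $\la$ nor the number of corners of $\la^\vee$, so $t$ is preserved; and the prescribed shifts are exactly those that fix both $m-a$ and $(n-m)-b$. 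The content of each move is an invariance lemma of the form $C_{m+1,n+1}(\wt\la,a+1,b)=C_{m,n}(\la,a,b)$ together with its column analogue, which I would deduce either from the line-incidence model, by showing that a redundant direction of the bounding rectangle contributes trivially to the classical intersection number, or algebraically, by comparing products via the quantum K Pieri rule and associativity across the two Grassmannians.

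Finally I would schedule the moves to reach the universal case. Deleting all repeated rows and columns terminates at a partition that is strict with strict conjugate, hence a staircase; one further \emph{trimming} move, which removes a full first row or column precisely when $\la^\vee$ already attains the maximum (so that $t$ is unchanged while the number of corners of $\la$ drops by one), drives this staircase to $\rho_t$ inside $\Gr(t,2t)$. Composing the recorded shifts along the way yields exactly $(a,b)\mapsto(a-m+t,\,b-n+m+t)$. The boundary cases, where $m-a>t$ or $(n-m)-b>t$ forces the target hook to be invalid, must be treated separately by showing that both sides vanish: geometrically, the hook is then too small for any line to sweep $\la$ back onto itself.

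I expect the main obstacle to be the single-move invariance lemma, that is, proving rigorously that a redundant rectangle direction drops out of the degree-one intersection number on $\Fl(m-1,m+1;n)$ (or, algebraically, that the Pieri-plus-associativity comparison closes up). A secondary but genuine difficulty is the bookkeeping: verifying that some schedule of moves preserves $t$ at every step and provably lands on $\rho_t$ rather than on another fully collapsed staircase, and confirming the vanishing in the boundary regime.
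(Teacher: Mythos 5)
Your proposal does not contain a proof; it contains a restatement of the theorem plus a plan whose crucial step is left open. The single-move invariance lemma you isolate --- $C_{m+1,n+1}(\wt\la,a+1,b)=C_{m,n}(\la,a,b)$ and its column analogue --- is exactly the paper's Remark \ref{rem:reduction}, which the paper observes is \emph{equivalent} to Theorem \ref{thm:main1}. Reducing the theorem to that lemma is therefore only a reformulation unless the lemma itself is proved, and your proposal merely gestures at two possible proofs (``from the line-incidence model'' or ``by Pieri-plus-associativity'') without carrying out either; you yourself flag it as the main obstacle. The paper's actual proof never establishes the single-move invariance directly. Instead, Lemma \ref{lemma:lincombo} writes $\Oh^\hkab$ as a linear combination of products $\Oh^{1^i}\Oh^j$, and Lemma \ref{lemma:longformula} then computes $C_{m,n}(\la,a,b)$ in closed form via the Pieri rules: producing $\Oh^{\la[1]}$ from $\Oh^\la\Oh^{1^i}\Oh^j$ forces both strips to be maximal, and the number of nonempty columns (resp.\ rows) of the maximal strip equals the number $t$ of quantum corners, giving the coefficient $(-1)^{n-i-j}\binom{t-1}{m-i}\binom{t-1}{n-m-j}$. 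The resulting double sum visibly depends on $\la$ only through $t$, and Theorem \ref{thm:main1} then follows by an elementary reindexing in which the terms with $i\leq m-t$ or $j\leq n-m-t$ vanish because $\binom{t-1}{m-i}=0$ or $\binom{t-1}{n-m-j}=0$. That Pieri computation is the missing engine in your plan: it is what makes any ``delete a repeated row or column'' invariance provable at all, and once you have it, the stabilization you want is already explicit.

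A second, independent problem is your opening identification of $C_{m,n}(\la,a,b)=N_{\la,\hkab}^{\la,1}$ with a degree-one K-theoretic Gromov--Witten invariant. In quantum K-theory, unlike quantum cohomology, the structure constants are \emph{not} individual Gromov--Witten invariants: because the pairing used to lower indices is itself $q$-deformed, $N_{\la,\mu}^{\nu,d}$ is a signed combination of invariants of degrees at most $d$. Consequently the Buch--Mihalcea quantum-equals-classical statement on $\Fl(m-1,m+1;n)$ does not directly compute $N_{\la,\hkab}^{\la,1}$; before any geometric invariance argument on the incidence variety could even begin, you would have to isolate and control the degree-zero correction terms, which your proposal does not address.
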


\begin{thm}
	\label{thm:main2}
	Let $t\geq1$, and let $0\leq a,b\leq t$.  Then:
	
	\[
		C_{t,2t}(\rho_t,a,b)=(-1)^{a+b+1}\sum\limits_{i=1}^{\min(a,b)}\binom{t-1-i}{a-i}\binom{t-1-i}{b-i}\\
	\]
\end{thm}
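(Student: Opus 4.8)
The plan is to prove the formula by induction, showing that both sides obey the same recursion. Write $f(t,a,b)$ for the binomial sum $\sum_{i=1}^{\min(a,b)}\binom{t-1-i}{a-i}\binom{t-1-i}{b-i}$, so that the claim is $C_{t,2t}(\rho_t,a,b)=(-1)^{a+b+1}f(t,a,b)$. After the reindexing $k=t-1-i$ (which rewrites $f$ as $\sum_{k}\binom{k}{t-1-a}\binom{k}{t-1-b}$), the sum defining $f(t-1,a-1,b-1)$ has the same summand but a partial range, and comparing the two partial sums telescopes to the single top term. This gives the clean recursion
\[
f(t,a,b)=f(t-1,a-1,b-1)+\binom{t-2}{a-1}\binom{t-2}{b-1},
\]
with boundary values $f(t,a,b)=0$ whenever $\min(a,b)=0$. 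Because the sign $(-1)^{a+b+1}$ is unchanged when $(t,a,b)$ is replaced by $(t-1,a-1,b-1)$, this is equivalent to
\[
C_{t,2t}(\rho_t,a,b)=C_{t-1,2t-2}(\rho_{t-1},a-1,b-1)+(-1)^{a+b+1}\binom{t-2}{a-1}\binom{t-2}{b-1}.
\]
Checking that the closed form satisfies this identity is routine, so the real content is to prove the same recursion for the structure constants themselves; one then inducts on $\min(a,b)$, which the recursion lowers by one at each step.

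To establish the recursion for the structure constants I would use a degree-one quantum-to-classical reduction. By definition $C_{t,2t}(\rho_t,a,b)=N_{\rho_t,(a\backslash b)}^{\rho_t,1}$ is the coefficient of $q\Oh^{\rho_t}$ in $\Oh^{\rho_t}\star\Oh^{(a\backslash b)}$, and degree-one invariants of $\Gr(t,2t)$ are governed by the space of lines $\Fl(t-1,t+1;2t)$. Crucially, \Theorem{main0} guarantees that this return coefficient is the unique genuinely quantum structure constant in the product, so that every auxiliary term produced along the way is an ordinary K-theoretic structure constant on a Grassmannian, computable by the Buch Littlewood--Richardson rule. I would use this to compare the degree-one computation on $\Gr(t,2t)$ with the analogous one on $\Gr(t-1,2t-2)$, under which $\rho_t$ contracts to $\rho_{t-1}$ and the hook $(a\backslash b)$ to $(a-1\backslash b-1)$; the rigidity of the staircase then forces the discrepancy between the two to be exactly the product of classical binomial numbers $\binom{t-2}{a-1}\binom{t-2}{b-1}$, which is the correction term above.

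The base case enters through the vanishing boundary values. When $a=0$ or $b=0$ the hook $(a\backslash b)$ is a single row or single column, and I would read off from the Buch--Mihalcea quantum K-theoretic Pieri rule that the coefficient of $q\Oh^{\rho_t}$ in $\Oh^{\rho_t}\star\Oh^{(b+1)}$, and in its conjugate, vanishes, matching $f(t,a,b)=0$. Iterating the recursion $\min(a,b)$ times lands on such a pure-row or pure-column term, so the induction closes. Throughout I would exploit the symmetry $a\leftrightarrow b$, which is visible both in the formula and in the conjugation automorphism of $\Gr(t,2t)$, to reduce to the case $a\le b$.

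I expect the middle step to be the main obstacle: rigorously extracting the recursion for the structure constants and pinning down the correction as precisely $\binom{t-2}{a-1}\binom{t-2}{b-1}$. The difficulty is twofold. First, the quantum K-theoretic structure constants are not the bare K-theoretic Gromov--Witten invariants but are corrected by the power-series quantum metric, so \Theorem{main0} must be applied with care to confine all corrections to classical terms before the Littlewood--Richardson rule can be invoked. Second, the boundary values $a=t$ and $b=t$ correspond to hooks that do not fit in the $t\times t$ box and are only defined formally through the reduction of \Theorem{main1}; these must be handled separately so that the recursion and its base cases remain consistent along the diagonal $a=b=t$, where the interior geometric argument does not directly apply.
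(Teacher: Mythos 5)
You have isolated exactly the right recursion: the paper's own induction rests on
\[
C_{t,2t}(\rho_t,a,b)=(-1)^{a+b+1}\binom{t-2}{a-1}\binom{t-2}{b-1}+C_{t-1,2t-2}(\rho_{t-1},a-1,b-1),
\]
and your verification that the closed-form sum satisfies this recursion (the telescoping that peels off the top term) and vanishes when $\min(a,b)=0$ is correct. The gap is that you never prove the recursion for the structure constants themselves, and everything you say about that step is aspiration rather than argument. You propose comparing a degree-one quantum-to-classical computation on $\Gr(t,2t)$ with one on $\Gr(t-1,2t-2)$, but no map, correspondence, or moduli-space relation between these two Grassmannians is ever specified; note that the paper's only tool for changing $(m,n)$, the reduction of \Theorem{main1}, preserves the number of quantum corners and therefore can never carry $\rho_t$ to $\rho_{t-1}$. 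The assertion that ``the rigidity of the staircase forces the discrepancy to be exactly $\binom{t-2}{a-1}\binom{t-2}{b-1}$'' is precisely the content of the theorem: it is the single statement carrying all the weight, and no mechanism for it is offered. \Theorem{main0} cannot fill this hole either---it says that the coefficients other than $N_{\rho_t,\hkab}^{\rho_t[1]}$ are classical, but it gives no relation between that one genuinely quantum coefficient on $\Gr(t,2t)$ and any quantity on a smaller Grassmannian.

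The repair requires no new geometry, because the paper has already converted the problem to algebra before this theorem is stated: \Corollary{taut} gives
\[
C_{t,2t}(\rho_t,a,b)=\sum_{i=1}^a\sum_{j=1}^b(-1)^{i+j+1}\binom{a-i+b-j}{a-i}\binom{t-1}{i-1}\binom{t-1}{j-1},
\]
so your recursion becomes a pure binomial-coefficient identity. That identity is what the paper actually proves, and it is not routine: the inner sum is collapsed by an inclusion--exclusion identity (\Lemma{kittens}, \Lemma{outersum}) to give $C_{t,2t}(\rho_t,a,b)=\sum_{i=1}^a(-1)^{b+i+1}\binom{t-1}{i-1}\binom{t-2-a+i}{b-1}$, and then an induction on $a$ with an auxiliary shift parameter $r$ (\Lemma{newparam}) is needed to split off the term $(-1)^{a+b+1}\binom{t-2}{a-1}\binom{t-2}{b-1}$ and recognize what remains as $C_{t-1,2t-2}(\rho_{t-1},a-1,b-1)$. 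Working from \Corollary{taut} also dissolves your worry about the boundary values $a=t$ or $b=t$: the formula and the identities are polynomial in their arguments, so no separate treatment of hooks that overflow the $t\times t$ box is needed, whereas your geometric comparison would genuinely break there. As written, your proposal defers its central claim to an argument that does not exist; supplying the algebraic derivation of the recursion from \Corollary{taut} is the missing content.
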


The remainder of this paper is split into five sections.  The next section describes quantum K-theory in more detail, with emphasis on how to interpret it combinatorially.  The following three sections cover the proofs of the three main theorems, while the last discusses the relationship between classical and quantum structure constants and proposes a combinatorial formula which matches Theorem \ref{thm:main2}.  Theorem \ref{thm:main0} is proved by doing combinatorial analysis to the quantum poset of $X$, Theorem \ref{thm:main1} is proved by reducing $\Gr(m,n)$ to $\Gr(t,2t)$ without changing the value of the relevant structure constant, and Theorem \ref{thm:main2} is proved by inducting on the length of the hook and the number of corners of $\la$.

\section{Quantum K-Theory}

\subsection{The Quantum Poset}

To work combinatorially in $\widehat{\QK}(X)$, we must first define the \textit{quantum poset} of $X$.  Consider $\Z^2$ as a grid of boxes covering the plane, where $(r,c)$ represents the box in row $r$ and column $c$.  By convention row numbers increase going down and column numbers increase going right, to match matrices and Young diagrams.  Declare two boxes $(r,c)$ and $(s,d)$ equivalent if $(r-s,c-d)=k(m,m-n)$ for some $k\in\Z$.  The quantum poset of $X$ is then $\Z^2/\Z(m,m-n)$ ordered by: $[(r,c)]\leq[(s,d)]$ if there exist representatives $(r',c')$ for $[(r,c)]$ and $(s',d')$ for $[(s,d)]$ such that $r'\leq s'$ and $c'\leq d'$.  Intuitively, equivalence classes increase in this order as you travel southeast.

We often identify an element of $\Z^2/\Z(m,m-n)$ with its unique representative whose first coordinate is in $\{1,...,m\}$, and so may refer to an equivalence class as a box.  A \textit{quantum shape} $\la\subset\Z^2/\Z(m,m-n)$ is a (downward-closed, nonempty, proper) order ideal under this partial order, which can also be thought of as an order ideal in $\Z^2$ that respects the equivalence relation.  To specify a quantum shape $\la$, it suffices to specify its maximal elements in rows 1 through $m$.  Specifically, we write $\la=(\la_1,\la_2,...,\la_m)$, where $\la_i$ is the column number of the last box of $\la$ in row $i$. Note that the $\la_i$ are subject to the constraints $\la_i\geq\la_{i+1}$ for each $i$ and $\la_m+n-m\geq\la_1$, or else $\la$ would not be downward-closed.  If each $\la_i$ is between 0 and $n-m$, we identify that quantum shape with a Young diagram and call it a \textit{classical shape}, as shown in Figure \ref{fig:classicalshape}.  Equivalently, $\la$ is a classical shape if the box $[(0,n-m)]=[(m,0)]$ is contained in $\la$ and the box $[(1,n-m+1)]=[(m+1,1)]$ is not.  The special case $\la=(1,1,...,1,0,0,...,0)$ with $a$ 1s is written $(1^a)$, and the special case $\la=(b,0,0,...,0)$ is written $(b)$.

\begin{figure}[]
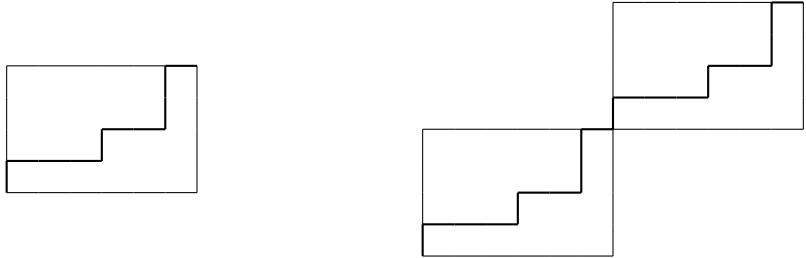

	\[ \tableau{12}
	{	[tl]& [t]& [t]& [t]& [tR]& [Tr]\\
		[l]& []& []& []& [R]& [r]\\
		[l]& []& [R]& [T]& [T]& [r]\\
		[TLb]& [Tb]& [Tb]& [b]& [b]& [br]
	}	
	\hspace{3cm}
	\tableau{12}
	{	[]& []& []& []& []& []& [tl]& [t]& [t]& [t]& [tR]& [Tr]\\
		[]& []& []& []& []& []& [l]& []& []& []& [R]& [r]\\
		[]& []& []& []& []& []& [l]& []& [R]& [T]& [T]& [r]\\
		[]& []& []& []& []& []& [TLb]& [Tb]& [Tb]& [b]& [b]& [br]\\
		[tl]& [t]& [t]& [t]& [tR]& [Tr]& []& []& []& []& []& []\\
		[l]& []& []& []& [R]& [r]& []& []& []& []& []& []\\
		[l]& []& [R]& [T]& [T]& [r]& []& []& []& []& []& []\\
		[TLb]& [Tb]& [Tb]& [b]& [b]& [br]& []& []& []& []& []& []
	}	 \]

	\caption{These are two diagrams of $\la=(5,5,3,0)$ in $\Z^2/\Z(4,-6)$.  In both, the heavy line is the southeastern boundary of $\la$.  On the left we only represent boxes with coordinates from $(1,1)$ to $(4,6)$, and on the right we show a portion of the corresponding order ideal in $\Z^2$.}
	\label{fig:classicalshape}
\end{figure} 

Generalizing the notion of the number of boxes in a Young diagram, we define $|\la|=\sum\limits_{i=1}^m\la_i$ for any shape $\la$.  For any two quantum shapes $\la\subset\nu$, $\nu/\la$ is the \textit{skew shape} consisting of the boxes contained in $\nu$ and not $\la$.  A skew shape is a horizontal (respectively, vertical) strip if it contains no more than one box in each column (respectively, row).  We write $\Row(\nu/\la)\in\{0,...,m\}$ to denote the number of nonempty rows with row numbers 1 through $m$ (where $\nu/\la$ is considered a subset of $\Z^2$), and $\Col(\nu/\la)\in\{0,...,n-m\}$ for the number of nonempty columns with column numbers 1 through $n-m$.

When $\la$ is a classical shape, let $\Oh^\la=[\Oh_{X^\la}]$ be the K-theory class of the structure sheaf of the Schubert variety $X^\la$.  The K-theory ring $\K(X)$ has $\Z$-linear basis given by $\{\Oh^\la:\la\text{ is a classical shape}\}$ (see, for example, \cite{buch_gamma}).  We write $\QK(X)\subset\widehat{\QK}(X)$ to denote $\K(X)\otimes_\Z\Z[q]$, the polynomial submodule of $\widehat{\QK}(X)$.  In fact this submodule is a subring -- i.e., for fixed $\la,\mu$ there exists a $d$ such that $N_{\la,\mu}^{\nu,d'}=0$ for any $\nu$ and any $d'\geq d$ \cite{buch_qkgrass}.  Then the set $\{q^d\Oh^\la:d\geq0,\la\text{ is a classical shape}\}$ is a $\Z$-linear basis for $\QK(X)$, and $\{q^d\Oh^\la:d\in\Z,\la\text{ is a classical shape}\}$ is a basis for the localization $\QK(X)_q$.

To see another way of expressing this basis of $\QK(X)$, we introduce a shift operation.  For any quantum shape $\la$ and any $d\in\Z$, define $\la[d]=\{(a+d,b+d):(a,b)\in\la\}$ to be the shape resulting from shifting the boundary of $\la$ by $d$ steps down and to the right.  For any northwest-to-southeast diagonal, there must be a maximal box that is contained in both that diagonal and $\la$, since $\la$ is nonempty and proper.  Shifting $\la$ by 1 causes the next box on that diagonal to be maximal in $\la$.  Thus there is always exactly one way to shift $\la$ such that the box $(0,n-m)$ is maximal in $\la$, so for any $\la$ there exists a unique $d$ such that $\la[-d]$ is a classical shape.  We then define $\Oh^\la=q^d\Oh^{\la[-d]}$, for any quantum shape $\la$.  This allows the basis of $\QK(X)_q$ to be equivalently expressed as $\{\Oh^\la:\la\text{ is a quantum shape}\}$, allowing $\QK(X)_q$ to be thought of purely combinatorially, without explicit reference to $q$.

For quantum shapes $\la,\mu,\nu$, we now write $N_{\la,\mu}^\nu$ for the structure constants of $\QK(X)_q$, instead of requiring that they be classical shapes and writing $d$ explicitly.  In other words, $\Oh^\la\Oh^\mu=\sum\limits_\nu N_{\la,\mu}^\nu\Oh^\nu$, where $\nu$ ranges over all quantum shapes.  In the case where all three are classical shapes, $N_{\la,\mu}^\nu=N_{\la,\mu}^{\nu,0}$ is a structure constant of $\K(X)$.

\subsection{The Quantum Pieri Rule}

Multiplication in $\QK(X)$ is fully determined by the quantum K-theoretic Pieri rule.  Here we present both the row and column forms of this rule, though once one is proven the other follows from the involution between $\Gr(m,n)$ and $\Gr(n-m,n)$.  The classical K-theory Pieri rule was originally proved by Lenart (\cite{Lenart_option1}), and the quantum K-theory Pieri rule has the exact same form when described using the quantum poset.  A version of the quantum Pieri rule in terms of the $q^d\Oh^\la$ form of the basis was proved by Buch and Mihalcea in \cite{buch_qkgrass}, and the version we state is from \cite{buch_qkpieri}.

\begin{thm}[\cite{Lenart_option1}, \cite{buch_qkgrass}, \cite{buch_qkpieri}]\label{thm:pieri} Let $\la$ be a quantum shape in $\Z^2/\Z(m,m-n)$, and let $0\leq i\leq m,0\leq j\leq n-m$.  Then:
	\begin{align*}
		\Oh^\la\Oh^j & =\sum\limits_\nu(-1)^{|\nu/\la|-j}\binom{\Row(\nu/\la)-1}{|\nu/\la|-j}\Oh^\nu\\
		\Oh^\la\Oh^{1^i} & =\sum\limits_\mu(-1)^{|\mu/\la|-i}\binom{\Col(\mu/\la)-1}{|\mu/\la|-i}\Oh^\mu
	\end{align*}
	
	Where the first sum is over all quantum shapes $\nu$ such that $\nu/\la$ is a horizontal strip, and the second is over all quantum shapes $\mu$ such that $\mu/\la$ is a vertical strip.  
\end{thm}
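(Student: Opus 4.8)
The plan is to prove the row formula for $\Oh^\la \cdot \Oh^{(j)}$; the column formula then follows for free from the transpose duality between $\Gr(m,n)$ and $\Gr(n-m,n)$, which exchanges rows with columns, interchanges $\Oh^{(j)}$ with $\Oh^{(1^j)}$, and swaps the two displayed sums, exactly as the surrounding text indicates. Working in $\QK(X)_q$ with the quantum-shape (cylindric) basis is the key simplification: because the shift operation $\la \mapsto \la[d]$ absorbs the power of $q$, one never tracks the degree $d$ explicitly, and a single uniform statement on the poset $\Z^2/\Z(m,m-n)$ simultaneously encodes the classical $d=0$ part and every quantum correction.

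The backbone of the argument is the ``quantum $=$ classical'' principle of Buch--Mihalcea: each K-theoretic Gromov--Witten invariant entering $N_{\la,(j)}^{\nu,d}$ equals a classical sheaf-Euler-characteristic intersection number of Schubert varieties on an auxiliary Grassmannian (or two-step flag variety). Granting this, the proof rests on two inputs. First, Lenart's classical K-theory Pieri rule supplies the $d=0$ coefficients: for a classical shape $\nu$ with $\nu/\la$ a horizontal strip the structure constant is $(-1)^{|\nu/\la|-j}\binom{\Row(\nu/\la)-1}{|\nu/\la|-j}$, and $0$ otherwise. Second, one must show that the positive-degree contributions, once repackaged through $\la[d]$ into genuine quantum shapes $\nu$ that wrap around the cylinder, obey the very same binomial formula, now with $\Row(\nu/\la)$ counting the nonempty rows $1$ through $m$ of the strip in $\Z^2$. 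The crucial point is that multiplying by the single row $(j)$ can only produce a horizontal strip $\nu/\la$ in the quantum poset, and the degree of each resulting term is pinned down by how far $\nu$ wraps, so the sum over all quantum shapes $\nu$ is precisely the sum over all pairs $(\nu,d)$.

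Concretely I would proceed in the following order. (a) Bound the degrees: a dimension count on $\Mb_{0,3}(\Gr(m,n),d)$, using that one marked class is the special class $(j)$, shows which $d$ can contribute and confirms that each quantum shape $\nu$ occurs at a single determined degree. (b) Apply quantum $=$ classical to rewrite $N_{\la,(j)}^{\nu,d}$ as an Euler characteristic of a triple intersection of Schubert varieties on the auxiliary space; because $(j)$ is a special Schubert class (a single row), this triple intersection simplifies, under the relevant projection, to a two-term classical intersection governed by the Pieri combinatorics. (c) Evaluate that Euler characteristic and match it, strip by strip, against Lenart's coefficient for the corresponding horizontal strip, tracking the wrap-around carefully. (d) Reassemble over all $\nu$ to read off the displayed row formula, then transpose to obtain the column version.

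The main obstacle sits squarely in steps (b)--(c): establishing the quantum $=$ classical reduction in the K-theoretic rather than cohomological setting, and then pinning down the resulting Euler characteristics with the correct signs. The alternating factor $(-1)^{|\nu/\la|-j}$ is the signature of K-theoretic inclusion--exclusion, and controlling it requires understanding the rational-singularities and connected-fiber behaviour of the correspondence between Gromov--Witten invariants and classical intersection numbers; this is where essentially all of the geometric difficulty lies. Once that identity is in hand, the combinatorial bookkeeping on the quantum poset -- checking that horizontal strips and their row counts transport correctly under the shift -- is comparatively routine.
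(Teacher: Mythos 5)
The first thing to note: the paper does not prove Theorem~\ref{thm:pieri} at all --- it is quoted from the literature (Lenart for the classical rule, Buch--Mihalcea for the quantum one), so there is no internal proof to compare yours against; the right comparison is with the proofs in \cite{buch_qkgrass} and \cite{buch_qkpieri}. Your outline does reconstruct the strategy of those works --- transpose duality to reduce to the row case, Lenart's classical Pieri rule for the degree-zero part, and the ``quantum $=$ classical'' principle for positive degrees --- so the route is the right one. But as written it is a plan rather than a proof: everything you label as steps (b)--(c), namely the K-theoretic quantum-to-classical reduction and the evaluation of the resulting Euler characteristics, is exactly the content of the cited theorem, and you defer it rather than supply it.

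Two concrete points would break the outline if you tried to execute it. First, step (a) does not work: in quantum K-theory, unlike quantum cohomology, Gromov--Witten invariants of excess degree do not vanish for dimension reasons --- the sheaf Euler characteristic of a positive-dimensional Gromov--Witten variety is generally nonzero --- so no dimension count on $\Mb_{0,3}(X,d)$ can bound the degrees that contribute. The degree bound (that only $d\le 1$ survives for a Pieri class, and more generally that the product is polynomial in $q$) is itself a nontrivial theorem, established by Buch--Mihalcea through the geometry of curve neighborhoods and rational connectedness of the relevant Gromov--Witten varieties, not through dimension constraints. Second, the structure constants $N_{\la,\mu}^{\nu,d}$ are not themselves Gromov--Witten invariants: they are extracted from the invariants by inverting the quantum K-theoretic metric, which mixes contributions across degrees with alternating signs. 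So even granting quantum $=$ classical, the step ``evaluate that Euler characteristic and match it against Lenart's coefficient'' skips the recursion that converts invariants into structure constants; handling that recursion, and showing it produces exactly the same binomial coefficients as the classical rule once everything is rewritten on the quantum poset, is precisely the work done in \cite{buch_qkgrass} and \cite{buch_qkpieri}.
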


\subsection{Hook Multiplication in $\QK(X)$}

Recall that $\hkab$ is the hook shape Young diagram with $a+1$ boxes down the first column and $b+1$ across the first row, for $a+b+1$ boxes total.  We now consider it as a classical shape in the quantum poset, and this paper proves a formula for the structure constants in the quantum product $\Oh^\la\Oh^\hkab$ for any quantum shape $\la$.  Note that $\hk{a}{0}=\Oh^{1^{a+1}}$ and $\hk{0}{b}=\Oh^{b+1}$.  All terms in $\Oh^\la\Oh^\hkab$ except for the coefficient $N_{\la,\hkab}^{\la[1]}$ of the $q\Oh^\la$ term can be directly computed using the ordinary K-theoretic Littlewood-Richardson rule, so most of our efforts focus on this coefficient.  As was defined in the introduction, we use the notation $C_{m,n}(\la,a,b)=N_{\la,\hkab}^{\la[1]}$ for these particular structure constants of $\QK(\Gr(m,n))$, and the notation $\rho_t$ for the staircase Young diagram, now considered as a quantum shape in $\Z^2/(t,-t)$ and shown in Figure \ref{fig:rhoexample}.  We also see the motivation for the definition of a quantum corner here -- the number of quantum corners a Young diagram has is equal to the number of maximal boxes in the corresponding classical shape.

\begin{figure}[]
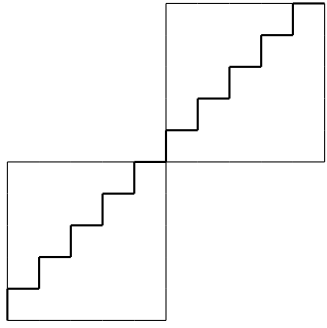

	\[ \tableau{12}
	{	[]& []& []& []& []& [lt]& [t]& [t]& [tR]& [Tr]\\
		[]& []& []& []& []& [l]& []& []& [TL]& [r]\\
		[]& []& []& []& []& [l]& []& [TL]& []& [r]\\
		[]& []& []& []& []& [l]& [TL]& []& []& [r]\\
		[]& []& []& []& []& [TLb]& [b]& [b]& [b]& [br]\\
		[tl]& [t]& [t]& [tR]& [Tr]& []& []& []& []& []\\
		[l]& []& [R]& [T]& [r]& []& []& []& []& []\\
		[l]& [R]& [T]& []& [r]& []& []& []& []& []\\
		[lRB]& [T]& []& []& [r]& []& []& []& []& []\\
		[Lb]& [b] &[b]&[b]&[br]& []& []& []& []& []
	}\]
	\caption{The shape $\rho_5$ as an element of $\Z^2/\Z(5,-5)$.  It has 4 corners when considered as a classical shape, but 5 quantum corners.}
	\label{fig:rhoexample}
\end{figure}

\section{Proof of Theorem \ref{thm:main0}}

K-theoretic Schubert classes given by hook shapes are unique in that they can be decomposed into terms that multiply at most a row by a column.  The following is an explicit description of how.

\begin{lemma}\label{lemma:lincombo}
	Fix $a,b\geq1$.  Then in both $\K(X)$ and $\QK(X)$:
	
	\begin{align}\label{eqn:lincombo}
	\Oh^\hkab=\sum\limits_{i=2}^{a+1}\binom{a-i+b}{b-1}\Oh^{1^i}+\sum\limits_{j=2}^{b+1}\binom{a+b-j}{a-1}\Oh^j-\sum\limits_{i=1}^a\sum\limits_{j=1}^b\binom{a-i+b-j}{a-i}\Oh^{1^i}\Oh^j
	\end{align}
\end{lemma}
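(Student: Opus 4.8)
The plan is to reduce the whole identity to one Pieri computation followed by coefficient bookkeeping. The crux is to evaluate the mixed product $\Oh^{1^i}\Oh^{j}$ of a single column class by a single row class. Applying the row form of the Pieri rule (\Theorem{pieri}) with $\la=(1^i)$, the only quantum shapes $\nu$ for which $\nu/(1^i)$ is a horizontal strip are hooks: the interlacing constraints force $\nu=\hk{i-1}{q}$ or $\nu=\hk{i}{q}$ for some $q\ge0$, since $\nu$ can differ from the column $(1^i)$ only by extending its first row and by at most one extra box at the bottom of its first column. Evaluating the coefficient $(-1)^{|\nu/(1^i)|-j}\binom{\Row(\nu/(1^i))-1}{|\nu/(1^i)|-j}$ in each case — where the binomials are $\binom{0}{\cdot}$ or $\binom{1}{\cdot}$ and so vanish for all but a few values of $q$ — I expect to obtain the three-term identity
\[
	\Oh^{1^i}\,\Oh^{j}=\hk{i-1}{j}+\hk{i}{j-1}-\hk{i}{j},
\]
valid for all $i,j\ge1$ (with $\hk{i}{0}=\Oh^{1^{i+1}}$ when $j=1$). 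This is the only place the Pieri rule is used, and it is the main computational step.

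With this identity in hand I would substitute it into the double sum on the right-hand side of \eqn{lincombo}, turning every product $\Oh^{1^i}\Oh^{j}$ into hook classes. The entire right-hand side then becomes a $\Z$-linear combination of hook classes $\hk{p}{q}$ (where $\hk{0}{q}=\Oh^{q+1}$ and $\hk{p}{0}=\Oh^{1^{p+1}}$ are the degenerate rows and columns), and it suffices to show that the coefficient of each $\hk{p}{q}$ equals $1$ when $(p,q)=(a,b)$ and $0$ otherwise. For a hook with $p,q\ge1$ other than $(a,b)$, the three summands of the product identity contribute $-\binom{a+b-p-q-1}{a-p-1}$, $-\binom{a+b-p-q-1}{a-p}$ and $+\binom{a+b-p-q}{a-p}$ (omitting any term whose index falls outside the ranges $1\le i\le a$, $1\le j\le b$), and these cancel by Pascal's rule; at $(p,q)=(a,b)$ only the term $+\binom{0}{0}=1$ survives. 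For the rows and columns, the two single sums in \eqn{lincombo} are exactly what is needed to cancel the boundary contributions produced by the $i=1$ and $j=1$ instances of the product identity, again by a single application of Pascal's rule each. Collecting, only $\Oh^\hkab$ remains, with coefficient $1$.

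Finally, the identity holds simultaneously in $\K(X)$ and $\QK(X)$ for the same reason: throughout the range in which $\hkab$ is a classical shape, every hook $\hk{i}{j}$ with $i\le a$ and $j\le b$ is again classical, so no $q$-terms appear; since the quantum Pieri rule has the same form as the classical one, the product identity above is literally the same computation in both rings, and the coefficient bookkeeping is purely formal. I expect the main obstacle to be precisely this bookkeeping at the edges of the index ranges: the single sums run over $2\le i\le a+1$ and $2\le j\le b+1$ while the double sum runs over $1\le i\le a$, $1\le j\le b$, so one must track carefully how the degenerate hooks are generated and confirm that the Pascal cancellations persist at the extreme indices. Once the three-term product identity is established, the interior cancellations are routine.
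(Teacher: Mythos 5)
Your proposal is correct, and its crux --- the three-term identity $\Oh^{1^i}\Oh^j=\hk{i-1}{j}+\hk{i}{j-1}-\hk{i}{j}$, obtained from the row Pieri rule applied to the column $(1^i)$ --- is exactly the computation the paper also starts from (equation \eqn{pierioutput}). Where you diverge is in how the hook identity is then established: the paper proceeds by induction (base case $a=b=1$; the case $a=1$ by induction on $b$; the case $b=1$ via the involution $\Gr(m,n)\leftrightarrow\Gr(n-m,n)$; then induction on $a+b$ for $a,b>1$), whereas you substitute the three-term identity into the right-hand side and check directly that every hook coefficient cancels except for a coefficient of $1$ on $\hk{a}{b}$. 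I verified your bookkeeping and it goes through: for $1\le p\le a-1$, $1\le q\le b-1$ the three contributions cancel by Pascal's rule, the same holds on the edges $p=a$ or $q=b$ with the convention $\binom{n}{-1}=0$, at $(p,q)=(a,b)$ only $\binom{0}{0}=1$ survives, and the two single sums exactly kill the pure-column and pure-row contributions coming from the $j=1$ and $i=1$ instances of the product identity. One small correction: that last cancellation, e.g.\ $\binom{a-p+b-1}{b-1}$ against $\binom{a-p+b-1}{a-p}$, is the symmetry identity $\binom{n}{k}=\binom{n}{n-k}$, not Pascal's rule. Your route buys a proof with no induction and no appeal to the Grassmannian duality, at the cost of explicit case analysis at the boundary of the index ranges; the paper's induction hides that case analysis inside the inductive step but needs the involution to dispose of $b=1$. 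Both treatments share the same mild gloss: one should note that no wrap-around (non-classical) quantum shapes can occur in the Pieri expansion of $\Oh^{1^i}\Oh^j$ when all hooks involved fit in the $m\times(n-m)$ rectangle, which is precisely why the identity is literally the same computation in $\K(X)$ and $\QK(X)$.
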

\begin{proof}
	We use the row Pieri rule to compute the product $\Oh^{1^i}\Oh^j$:
	
	\begin{align}\label{eqn:pierioutput}
		\Oh^{1^i}\Oh^j &=\hk{i-1}{j}+\hk{i}{j-1}-\hk{i}{j}
	\end{align}

	It is immediately clear from the Pieri rule that the coefficients on the existing terms in equation (\ref{eqn:pierioutput}) are correct.  To see that no other terms can exist in this product, notice that if $\nu/(1^i)$ is a horizontal strip it must have at most two nonempty rows, so $\Row(\nu/(1^i))-1$ is either 1 or 0.  If it is 0 we must have  $|\nu/(1^i)|=j$, so the only possible shape $\nu$ can be is $(i-1\backslash j)$.  If it is 1 we have either $|\nu/(1^i)|=j$ or $|\nu/(1^i)|=j+1$, and the only possible shapes $\nu$ can be are $(i\backslash j-1)$ and $(i\backslash j)$ respectively.
	
	Now we induct to prove the lemma.  In the base case $a=b=1$, equation (\ref{eqn:lincombo}) reduces to $\hk{1}{1}=\Oh^{1^2}+\Oh^2-\Oh^1\Oh^1$, which is exactly equation (\ref{eqn:pierioutput}).  Thus the base case holds.
	
	If $a=1$, equation \ref{eqn:lincombo} reduces to $\hk{1}{b}=\Oh^{1^2}+\sum\limits_{j=2}^{b+1}\Oh^j-\sum\limits_{j=1}^b\Oh^1\Oh^j$.  Inductively assume that, for some $b\geq2$, this holds for $b-1$:
	
	\begin{align*}
		\hk{1}{b} & = \hk{0}{b}+\hk{1}{b-1}-\Oh^{1^1}\Oh^b\\
		& = \Oh^{b+1}+\Oh^{1^2}+\sum\limits_{j=2}^b\Oh^j-\sum\limits_{j=1}^{b-1}\Oh^1\Oh^j-\Oh^1\Oh^b\\
		& = \Oh^{1^2}+\sum\limits_{j=2}^{b+1}\Oh^j-\sum\limits_{j=1}^b\Oh^1\Oh^j\\
	\end{align*}

	Which completes the induction in this case.  The proof for $b=1$ follows from the involution between $\Gr(m,n)$ and $\Gr(n-m,n)$.  Finally, we assume $a,b>1$ and induct over the sum $a+b$, completing the proof of the lemma.
	
	\begin{align*}
		\hk{a}{b} & = \hk{a-1}{b}+\hk{a}{b-1}-\Oh^{1^a}\Oh^b\\
		& = -\Oh^{1^a}\Oh^b+\sum\limits_{i=2}^a\binom{a-i+b-1}{b-1}\Oh^{1^i}+\sum\limits_{j=2}^{b+1}\binom{a+b-j-1}{a-2}\Oh^j\\
		& \hspace{5mm}-\sum\limits_{i=1}^{a-1}\sum\limits_{j=1}^b\binom{a-i+b-j-1}{a-i-1}\Oh^{1^i}\Oh^j+\sum\limits_{i=2}^{a+1}\binom{a-i+b-1}{b-2}\Oh^{1^i}\\
		& \hspace{5mm}+\sum\limits_{j=2}^b\binom{a+b-j-1}{a-1}\Oh^j-\sum\limits_{i=1}^a\sum\limits_{j=1}^{b-1}\binom{a-i+b-j-1}{a-i}\Oh^{1^i}\Oh^j\\
		& =\sum\limits_{i=2}^{a+1}\binom{a-i+b}{b-1}\Oh^{1^i}+\sum\limits_{j=2}^{b+1}\binom{a+b-j}{a-1}\Oh^j-\sum\limits_{i=1}^a\sum\limits_{j=1}^b\binom{a-i+b-j}{a-i}\Oh^{1^i}\Oh^j\\
	\end{align*}
\end{proof}

Define a nonempty skew shape $\nu/\la$ to be a \textit{rim} if $\nu\subseteq\la[1]$.  This is equivalent to saying $\nu$ is the result of adding a horizontal strip and a vertical strip to $\la$, or to saying that $\nu/\la$ contains no 2 by 2 squares.  We say that $\nu/\la$ is an \textit{unbroken} rim if $\nu=\la[1]$, and is a \textit{broken} rim if $\nu\subsetneq\la[1]$.

\begin{cor}\label{cor:atmostq}
	If $\Oh^\nu$ has nonzero coefficient in $\Oh^\la\Oh^\hkab$, then $\nu/\la$ is a rim.
\end{cor}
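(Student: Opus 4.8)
The plan is to leverage \Lemma{lincombo} to express $\Oh^\hkab$ (for $a,b\ge 1$) as a $\Z$-linear combination of the single-strip classes $\Oh^{1^i}$ and $\Oh^j$ together with the mixed products $\Oh^{1^i}\Oh^j$, and then to bound the shapes that each of these can contribute to $\Oh^\la\Oh^\hkab$. Distributing over the sum, it suffices to show that every $\nu$ occurring with nonzero coefficient in $\Oh^\la\Oh^{1^i}$, in $\Oh^\la\Oh^j$, or in $\Oh^\la\Oh^{1^i}\Oh^j$ satisfies $\nu\subseteq\la[1]$, i.e. that $\nu/\la$ is a rim; the degenerate cases $a=0$ or $b=0$, where $\hkab$ is a single row or column, follow directly from the Pieri rule. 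For the single-strip terms this is immediate, since \Theorem{pieri} forces $\nu/\la$ to be a vertical or horizontal strip, which contains no $2\times 2$ square. The crux is therefore the mixed term.

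For $\Oh^\la\Oh^{1^i}\Oh^j$ I would apply the Pieri rule twice: first $\Oh^\la\Oh^{1^i}=\sum_\mu c_\mu\,\Oh^\mu$ with each $\mu/\la$ a vertical strip, and then $\Oh^\mu\Oh^j=\sum_\nu c'_\nu\,\Oh^\nu$ with each $\nu/\mu$ a horizontal strip. Thus every contributing $\nu$ is obtained from $\la$ by first adding a vertical strip to reach some intermediate shape $\mu$ and then adding a horizontal strip. The key combinatorial claim is: if $\mu/\la$ is a vertical strip and $\nu/\mu$ is a horizontal strip, then $\nu/\la$ contains no $2\times 2$ square, hence is a rim.

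To prove the claim, suppose for contradiction that $\nu/\la$ contains a $2\times 2$ square with boxes $A=(r,c)$, $B=(r,c+1)$, $C=(r+1,c)$, $D=(r+1,c+1)$, none of which lie in $\la$. Each of these four boxes lies either in $\mu$ (and hence in the vertical strip $\mu/\la$, since it is not in $\la$) or in $\nu\smallsetminus\mu$ (and hence in the horizontal strip $\nu/\mu$). Since $\mu$ is an order ideal, $B\in\mu$ would force $A\in\mu$, putting two boxes of row $r$ into the vertical strip $\mu/\la$ and contradicting that it is a vertical strip; hence $B\notin\mu$, so $B\in\nu/\mu$. The same argument applied to the pair $D,C$ in row $r+1$ gives $D\notin\mu$, so $D\in\nu/\mu$. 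But $B$ and $D$ both lie in column $c+1$, contradicting that $\nu/\mu$ is a horizontal strip. This establishes the claim, and combining it with the reduction above yields the corollary.

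I expect the $2\times 2$ argument to be the only real content; the main subtlety to get right is the direction of the order-ideal closure (up and to the left in the quantum poset), which is precisely what forces the right-hand column $\{B,D\}$ of any putative square into the horizontal strip. I would also record explicitly that no cancellation can rescue a non-rim shape: since every individual summand of the expansion contributes only rims, any $\nu$ for which $\nu/\la$ is not a rim has coefficient $0$ in each term, and therefore in the total product $\Oh^\la\Oh^\hkab$.
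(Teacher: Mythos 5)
Your proposal is correct and follows essentially the same route as the paper: expand $\Oh^\hkab$ via \Lemma{lincombo} into products $\Oh^{1^i}\Oh^j$, then use the Pieri rules to see that every contributing $\nu$ arises from $\la$ by adding a vertical strip and a horizontal strip. The only difference is that you explicitly prove, via the $2\times 2$-square contradiction, that such a $\nu$ satisfies the rim condition, whereas the paper treats this as immediate from the equivalent characterizations built into its definition of a rim; your version is a more self-contained account of the same argument.
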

\begin{proof}	
	Let $\nu$ be such that $\Oh^\nu$ has nonzero coefficient in $\Oh^\la\Oh^\hkab$.  By Lemma \ref{lemma:lincombo}, $\hk{a}{b}$ is a linear combination of products $\Oh^{1^i}\Oh^j$, $i,j\geq0$.  Then $\Oh^\la\Oh^\hkab$ is a linear combination of $\Oh^\la\Oh^{1^i}\Oh^j$.  By the Pieri rules, all terms in the expansion of $\Oh^\la\Oh^{1^i}\Oh^j$ are obtained by adding a horizontal and a vertical strip to $\la$.  $\Oh^\nu$ is one such term, so $\nu$ is the result of adding a horizontal and a vertical strip to $\la$ -- i.e., $\nu/\la$ is a rim.
\end{proof}

We now consider translating quantum shapes in the quantum poset. This is useful because structure constants for classical shapes are known by the K-theoretic Littlewood-Richardson rule, and since $\la$ is classical exactly when $(0,n-m)$ is maximal in $\la$, any shape can be translated to become a classical shape. As we see in Lemma \ref{lemma:sidel}, some translations preserve structure constants, showing that particular
quantum structure constants are equal to classical structure constants.

\begin{lemma}\label{lemma:sidel}
	If $\la'$ is the translation of $\la$ by $r$ rows down and $s$ columns to the right, and $\nu'$ is the same translation of $\nu$, then $N_{\la,\mu}^\nu=N_{\la',\mu}^{\nu'}$.
\end{lemma}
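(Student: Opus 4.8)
The plan is to reduce the statement to the quantum Pieri rule (Theorem \ref{thm:pieri}), which determines every product in $\QK(X)$, and to exploit that the Pieri coefficients are manifestly invariant under simultaneous translation of $\la$ and $\nu$. First I would verify that the three quantities appearing in the Pieri coefficients, namely $|\nu/\la|$, $\Row(\nu/\la)$, and $\Col(\nu/\la)$, together with the conditions ``$\nu/\la$ is a horizontal strip'' and ``$\nu/\la$ is a vertical strip'', are all unchanged when $\la$ and $\nu$ are translated by the same vector $(r,s)$. For $|\nu/\la|$ and the strip conditions this is immediate, since a translation is a bijection of boxes preserving rows and columns up to a uniform shift. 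For $\Row$ and $\Col$ the key is periodicity: lifting $\nu/\la$ to a subset $S\subseteq\Z^2$, the set $S$ is invariant under the period $(m,m-n)$, so its set of occupied rows is periodic modulo $m$ and its set of occupied columns modulo $n-m$. Hence $\Row(\nu/\la)$ is the number of occupied row-residues modulo $m$ and $\Col(\nu/\la)$ the number of occupied column-residues modulo $n-m$; translating by $(r,s)$ merely shifts these residue sets and leaves their cardinalities unchanged.

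Next I would package translation as a $\Z[q,q^{-1}]$-linear operator $T=T_{r,s}$ on $\QK(X)_q$, defined on the basis by $T\,\Oh^\la=\Oh^{\la'}$, where $\la'$ is the translate of $\la$, re-expressed through the identity $\Oh^{\kappa[e]}=q^e\Oh^\kappa$ (a consequence of $\Oh^\kappa=q^d\Oh^{\kappa[-d]}$) when $\la'$ is not itself classical. Because translation is an invertible bijection of quantum shapes commuting with the diagonal shift, $T$ is an invertible $\Z[q,q^{-1}]$-linear map permuting the basis up to powers of $q$. Step one then shows precisely that $T$ commutes with right multiplication by each special class: $T(\Oh^\la\,\Oh^{(j)})=\Oh^{\la'}\,\Oh^{(j)}=T(\Oh^\la)\,\Oh^{(j)}$, and likewise for $\Oh^{(1^i)}$, since the translate $\nu\mapsto\nu'$ is a bijection between the index sets of the two Pieri expansions carrying each coefficient to itself.

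Finally I would bootstrap from the special classes to an arbitrary $\Oh^\mu$. Since multiplication in $\QK(X)$ is determined by the quantum Pieri rule, i.e. the special classes generate the ring, every $\Oh^\mu$ is a $\Z[q,q^{-1}]$-polynomial in the $\Oh^{(j)}$ and $\Oh^{(1^i)}$. A short induction on the number of special factors, using that $T$ commutes with right multiplication by a single special class and is $\Z[q,q^{-1}]$-linear, then shows $T$ commutes with right multiplication by $\Oh^\mu$ for every $\mu$. Applying this to $\Oh^\la$ gives $\sum_\nu N_{\la,\mu}^\nu\,\Oh^{\nu'}=T(\Oh^\la\Oh^\mu)=\Oh^{\la'}\Oh^\mu=\sum_\sigma N_{\la',\mu}^\sigma\,\Oh^\sigma$, and comparing the coefficient of $\Oh^{\nu'}$ on both sides yields $N_{\la,\mu}^\nu=N_{\la',\mu}^{\nu'}$.

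I expect the main obstacle to be the bootstrapping step: one must justify that the special classes genuinely generate the quantum product ring (equivalently, that the Pieri rule determines all structure constants) and set up the induction carefully, since the quantum product differs from the classical one and naive ``leading term'' arguments must account for the $q$-corrections. By contrast, the translation invariance of $\Row$, $\Col$, and $|\nu/\la|$ in step one, while the conceptual heart of why the lemma is true, is a routine periodicity check.
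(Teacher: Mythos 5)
Your proposal is workable in outline, but it takes a genuinely different and substantially heavier route than the paper, and its load-bearing step is exactly the one you flag but do not resolve. The paper's proof rests on a single observation that makes the whole operator formalism unnecessary: by the column Pieri rule, a vertical strip in the quantum poset has at most $m$ boxes (one per row), so in the expansion of $\Oh^\la\Oh^{1^m}$ the coefficient $(-1)^{|\mu/\la|-m}\binom{\Col(\mu/\la)-1}{|\mu/\la|-m}$ vanishes unless $|\mu/\la|=m$, i.e.\ unless the strip adds one box to every row; the unique such $\mu$ is the translate of $\la$ one column to the right, and its coefficient is $\binom{\Col(\mu/\la)-1}{0}=1$. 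Thus $\Oh^\la\Oh^{1^m}=\Oh^{\la'}$, and similarly $\Oh^\la\Oh^{n-m}$ is the translate one row down. Your translation operator $T_{r,s}$ is therefore not an auxiliary linear map that must be shown to commute with the product: it \emph{is} multiplication by the ring element $(\Oh^{1^m})^s(\Oh^{n-m})^r$, and the identity $N_{\la,\mu}^\nu=N_{\la',\mu}^{\nu'}$ follows from associativity and commutativity alone, with no need to commute $T$ past an arbitrary $\Oh^\mu$.

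The gap in your version is the bootstrap. You need every $\Oh^\mu$ to be a $\Z[q,q^{-1}]$-polynomial in the special classes under the \emph{quantum} product, and you justify this by equating it with the paper's sentence that ``multiplication in $\QK(X)$ is fully determined by the quantum K-theoretic Pieri rule.'' That equivalence is real but is itself a nontrivial theorem (due to Buch and Mihalcea), not a formality: classical generation of $\K(X)$ by special classes plus the fact that the quantum product deforms the classical one only yields, by the evident Nakayama-style recursion, generation over the power series ring $\Z[[q]]$ in $\widehat{\QK}(X)$; upgrading to polynomial generation over $\Z[q]$ requires additional finiteness/filtration arguments. So as written, your proof of a lemma about structure constants quietly invokes a global structural theorem about the ring, whereas the paper's argument uses nothing beyond the Pieri rule for the two classes $\Oh^{1^m}$ and $\Oh^{n-m}$ and the ring axioms. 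Your step one (translation invariance of $|\nu/\la|$, $\Row$, $\Col$, and the strip conditions) is correct and correctly argued via periodicity, and your final coefficient comparison is fine; if you replace the bootstrap by the observation that translation is multiplication by the Sidel classes, your argument collapses to the paper's.
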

\begin{proof}
	From the Pieri rules, we see that multiplication by the Sidel class $\Oh^{1^m}$ translates $\la$ one column to the right, and multiplication by the Sidel class $\Oh^{n-m}$ translates $\la$ one row down.  Then $\Oh^{\la'}=(\Oh^{1^m})^s(\Oh^{n-m})^r\Oh^\la$, and:
	
	\begin{align*}
		\Oh^{\la'}\Oh^\mu &= (\Oh^{1^m})^s(\Oh^{n-m})^r\Oh^\la\Oh^\mu\\
		&= (\Oh^{1^m})^s(\Oh^{n-m})^r\sum\limits_\nu N_{\la,\mu}^\nu\Oh^\nu\\
		&= \sum\limits_\nu N_{\la,\mu}^\nu\left((\Oh^{1^m})^s(\Oh^{n-m})^r\Oh^\nu\right)\\
		&= \sum\limits_\nu N_{\la,\mu}^\nu\Oh^{\nu'}
	\end{align*}

	This means the coefficient on $\Oh^{\nu'}$ in $\Oh^{\la'}\Oh^\mu$ is precisely $N_{\la,\mu}^\nu$, as desired.
\end{proof}

We can now prove Theorem \ref{thm:main0}.  The statement of Theorem \ref{thm:main0} given in the introduction is for $\la,\nu$ classical shapes, but in fact this proof works when they are arbitrary quantum shapes.  Restating the theorem in the language of quantum shapes and slightly strengthening it, it becomes Theorem \ref{thm:main0quantum}.

\begin{thm}\label{thm:main0quantum}
	If $N_{\la,\hkab}^\nu\neq0$, then $\la\subseteq\nu\subseteq\la[1]$.  Furthermore, if $N_{\la,\hkab}^\nu\neq0$ and $\nu\neq\la[1]$, then $N_{\la,\hkab}^\nu$ is equal to an explicitly determined structure constant of $\K(X)$.
\end{thm}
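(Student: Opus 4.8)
The plan is to treat the two assertions separately. The first---that $N_{\la,\hkab}^\nu\neq0$ forces $\la\subseteq\nu\subseteq\la[1]$---is immediate from Corollary~\ref{cor:atmostq}: a nonzero coefficient means $\nu/\la$ is a rim, and by definition a rim is precisely a nonempty skew shape with $\nu\subseteq\la[1]$, i.e. $\la\subseteq\nu\subseteq\la[1]$. All the real content is in the second assertion, so from now on I assume $\nu\subsetneq\la[1]$ (a broken rim) and aim to identify $N_{\la,\hkab}^\nu$ with a classical structure constant.

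The engine is Lemma~\ref{lemma:sidel}: translating $\la$ and $\nu$ by one and the same $(r,s)$---while leaving the middle factor $\hkab$ fixed---does not change the constant, so $N_{\la,\hkab}^\nu=N_{\la',\hkab}^{\nu'}$ where $\la'=\la+(r,s)$ and $\nu'=\nu+(r,s)$. If I can arrange the translation so that $\la'$ and $\nu'$ are both classical shapes, then all three shapes in $N_{\la',\hkab}^{\nu'}$ are classical, so this equals the degree-zero constant $N_{\la',\hkab}^{\nu',0}$ of $\K(X)$, which is explicitly computed by the K-theoretic Littlewood--Richardson rule \cite{buch_gamma}. Thus the whole problem reduces to producing one good translation.

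To find it I would use the characterization of classicality recorded above: a shape $\mu$ is classical exactly when $(0,n-m)\in\mu$ and $(1,n-m+1)\notin\mu$. I want both of these for $\la'$ and for $\nu'$, but since $\la'\subseteq\nu'$ the four requirements collapse to two: $(0,n-m)\in\la'$ and $(1,n-m+1)\notin\nu'$ (the box $(0,n-m)$ then also lies in $\nu'$, and the box $(1,n-m+1)$ is then absent from $\la'$ as well). Pulling back through the translation, these two conditions say exactly that I need a box $B\in\la$ with $B+(1,1)\notin\nu$, after which I set $(r,s)=(0,n-m)-B$.

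The existence of such a $B$ is the crux, and this is where the broken-rim hypothesis is used. Since $\nu\subsetneq\la[1]$, I may choose a box $C\in\la[1]\setminus\nu$; putting $B=C-(1,1)$ gives $B\in\la$ (because $C\in\la[1]$) while $B+(1,1)=C\notin\nu$, which is precisely what is required. The main obstacle is thus not the construction of $B$---which the broken rim hands over immediately---but the preceding bookkeeping: correctly reducing simultaneous classicality of $\la'$ and $\nu'$ to the two clean membership conditions using the containment $\la'\subseteq\nu'$, and checking that Lemma~\ref{lemma:sidel} applies to the (possibly negative) translation $(r,s)$, which lives in the localized ring $\QK(X)_q$.
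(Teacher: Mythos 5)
Your proof is correct and follows essentially the same route as the paper's: Corollary~\ref{cor:atmostq} gives $\la\subseteq\nu\subseteq\la[1]$, and Lemma~\ref{lemma:sidel} is used to translate a box of $\la[1]\setminus\nu$ to $(1,n-m+1)$ (equivalently, its northwest neighbor to $(0,n-m)$) so that both shapes become classical and the constant is identified with a $\K(X)$ structure constant via the Littlewood--Richardson rule. The only cosmetic difference is that the paper chooses a \emph{maximal} box of $\la[1]$ outside $\nu$ and uses maximality to conclude $\la'$ is classical, whereas you take an arbitrary such box and instead collapse the four classicality conditions to two via the containment $\la'\subseteq\nu'$ --- a slight streamlining of the same argument.
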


To see that Theorem \ref{thm:main0quantum} implies Theorem \ref{thm:main0}, assume that $\nu,\la$ are classical shapes and $a,b,d$ are integers with $N_{\la,\hkab}^{\nu,d}\neq0$.  Then $N_{\la,\hkab}^{\nu[d]}\neq0$, since $q^d\Oh^\nu=\Oh^{\nu[d]}$.  

We must have $\la\subseteq\nu[d]$, since multiplication by a hook never removes boxes from a quantum shape.  Since $\la$ and $\nu$ are classical $(0,n-m)$ is maximal in each of them.  Since $(0,n-m)\in\la\subset\nu[d]$ and $(0,n-m)$ is maximal in $\nu$, this means $d\geq0$.  Now by Theorem \ref{thm:main0quantum}, $\nu[d]\subseteq\la[1]$.  This means $\nu\subseteq\la[1-d]$, so by the same reasoning as above $1-d\geq0$.  Thus either $d=0$ or $d=1$.  If $d=1$, we have $\nu[1]\subseteq\la[1]$, so $\nu\subseteq\la$.  This proves the first sentence of Theorem \ref{thm:main0}.  For the second sentence, it suffices to notice that $(\nu,d)=(\la,1)$ iff $\nu=\la[1]$.

\begin{proof}[Proof of Theorem \ref{thm:main0quantum}]
	By Corollary \ref{cor:atmostq}, $\nu\subseteq\la[1]$.  Since multiplication by a hook does not remove boxes from a quantum shape, $\la\subseteq\nu$.  This proves the first sentence of the theorem.
	
	Now assume $N_{\la,\hkab}^\nu\neq0$ and $\nu\neq\la[1]$.  This means $\nu\subsetneq\la[1]$, so $\nu/\la$ is a broken rim.  Choose a maximal box $(r,c)\in\la[1]$ such that $(r,c)\notin\nu$.  By Lemma \ref{lemma:sidel}, we can translate $\la$ and $\nu$ by the same amount, and $N_{\la,\hkab}^\nu$ will not change.  Define $\nu'$ and $\la'$ to be the shapes obtained from $\nu$ and $\la$ respectively under the translation that sends $(r,c)$ to $(1,n-m+1)$.  Since $(r,c)$ is maximal in $\la[1]$, $(1,n-m+1)$ is maximal in $\la'[1]$, which in turn means $(0,n-m)$ is maximal in $\la'$.  Thus $\la'$ is a classical shape.  Since $\la\subset\nu$, $\la'\subset\nu'$, so $(0,n-m)\in\nu'$ as well.  Furthermore, since $(r,c)\notin\nu$, $(1,n-m+1)\notin\nu'$, and so $\nu'$ is a classical shape.  Since $\la'$ and $\nu'$ are both classical shapes, $N_{\la,\hkab}^\nu=N_{\la',\hkab}^{\nu'}$ is an explicit structure constant of the ordinary K-theory  ring, as desired.
\end{proof}

\section{Proof of Theorem \ref{thm:main1}}

Now we can use the Pieri rules to find the $q\Oh^\la$ coefficient of $\Oh^\la\Oh^\hkab$.  Recall that we define $C_{m,n}(\la,a,b)=N^{\la[1]}_{\la,\hkab}$, with $C_{m,n}(\la,a,b)=0$ if $a<0$ or $b<0$.  The formulae presented in this section do not have manifestly determined signs; Theorem \ref{thm:main2} covers turning them into manifestly positive sums.

\begin{lemma}\label{lemma:longformula}
	When $\la$ has $t$ quantum corners and $a,b\geq0$, $$C_{m,n}(\la,a,b)=\sum\limits_{i=1}^a\sum\limits_{j=1}^b(-1)^{n-i-j-1}\binom{a-i+b-j}{a-i}\binom{t-1}{m-i}\binom{t-1}{n-m-j}$$
\end{lemma}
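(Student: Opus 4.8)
The plan is to expand $\Oh^\la\Oh^\hkab$ using the decomposition of Lemma~\ref{lemma:lincombo} and extract the coefficient of $\Oh^{\la[1]}$ term by term. That decomposition writes $\Oh^\hkab$ as a combination of the pure column classes $\Oh^{1^i}$, the pure row classes $\Oh^j$, and the cross products $\Oh^{1^i}\Oh^j$, so it suffices to compute the coefficient of $\Oh^{\la[1]}$ in each of $\Oh^\la\Oh^{1^i}$, $\Oh^\la\Oh^j$, and $\Oh^\la\Oh^{1^i}\Oh^j$. The first fact I would record is that the full rim $\la[1]/\la$ has exactly $|\la[1]/\la| = n$ boxes and meets all $m$ rows and all $n-m$ columns of the fundamental domain; the box count follows from $(\la[1])_r = \la_{r-1}+1$ together with the wrap-around identity $\la_0 = \la_m + (n-m)$.

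Given this, the pure terms contribute nothing. By the column Pieri rule of Theorem~\ref{thm:pieri}, the coefficient of $\Oh^{\la[1]}$ in $\Oh^\la\Oh^{1^i}$ is nonzero only if $\la[1]/\la$ is a vertical strip, which is impossible since a vertical strip has at most $m$ boxes while $\la[1]/\la$ has $n>m$; symmetrically the coefficient in $\Oh^\la\Oh^j$ vanishes because $\la[1]/\la$ has $n > n-m$ boxes and cannot be a horizontal strip. Hence the entire value of $C_{m,n}(\la,a,b)$ comes from the final double sum of Lemma~\ref{lemma:lincombo}.

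Next I would compute the coefficient of $\Oh^{\la[1]}$ in $\Oh^\la\Oh^{1^i}\Oh^j$ by applying the column Pieri rule and then the row Pieri rule. Reaching $\la[1]$ requires an intermediate shape $\mu$ with $\la\subseteq\mu\subseteq\la[1]$ such that $\mu/\la$ is a vertical strip and $\la[1]/\mu$ is a horizontal strip. The vertical-strip condition forces $\mu_r \le \la_r+1$, while the horizontal-strip condition forces $\mu_r \ge (\la[1])_{r+1} = \la_r+1$; hence $\mu$ is unique, with $\mu_r = \la_r+1$ for every $r$. For this $\mu$ one checks $|\mu/\la| = m$ and $|\la[1]/\mu| = n-m$, so the two Pieri coefficients are $(-1)^{m-i}\binom{\Col(\mu/\la)-1}{m-i}$ and $(-1)^{(n-m)-j}\binom{\Row(\la[1]/\mu)-1}{(n-m)-j}$.

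The crux is then to identify $\Col(\mu/\la) = \Row(\la[1]/\mu) = t$. The skew shape $\mu/\la$ consists of the single box $(r,\la_r+1)$ in each row, and two consecutive such boxes lie in distinct columns exactly when $\la_r > \la_{r+1}$, i.e. exactly at a maximal box of $\la$; likewise $\la[1]/\mu$ is nonempty in row $r$ exactly when $\la_{r-1} > \la_r$. Counting these descents cyclically around the fundamental domain yields the number of maximal boxes of $\la$, which by the discussion preceding this subsection equals the number $t$ of quantum corners. I expect this identification to be the main obstacle, as it requires care with the cyclic wrap-around and with the convention for counting columns $1$ through $n-m$ of a skew shape in the quantum poset (for instance when $\la_1 = n-m$, so that a box sits in column $n-m+1$ and must be reduced modulo the shift). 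Once $\Col(\mu/\la)=\Row(\la[1]/\mu)=t$ is established, substituting into the two Pieri coefficients, multiplying by the coefficient $-\binom{a-i+b-j}{a-i}$ attached to $\Oh^{1^i}\Oh^j$ in Lemma~\ref{lemma:lincombo}, and using $(-1)^{(m-i)+(n-m-j)+1} = (-1)^{n-i-j-1}$ produces the summand $(-1)^{n-i-j-1}\binom{a-i+b-j}{a-i}\binom{t-1}{m-i}\binom{t-1}{n-m-j}$; summing over $1\le i\le a$ and $1\le j\le b$ gives the stated formula.
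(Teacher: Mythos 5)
Your proposal is correct and follows essentially the same route as the paper's proof: decompose $\Oh^\hkab$ via Lemma~\ref{lemma:lincombo}, rule out the pure row/column terms, observe that reaching $\la[1]$ forces both Pieri strips to be maximal (your explicit uniqueness argument $\mu_r=\la_r+1$ makes this precise), and identify $\Col(\mu/\la)=\Row(\la[1]/\mu)=t$ --- your cyclic descent count $\la_r>\la_{r+1}$ is exactly the paper's identification of strip columns/rows with corner boxes of $\la$. The only cosmetic difference is that the paper separately notes the trivial case $a=0$ or $b=0$ (where Lemma~\ref{lemma:lincombo} does not apply and both sides vanish), which your argument covers in spirit but should state explicitly.
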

\begin{proof}
	Writing $\Oh^\hkab$ as a linear combination of $\Oh^{1^i}$, $\Oh^j$, and $\Oh^{1^i}\Oh^j$, we see that it suffices to find the coefficient on $q\Oh^\la$ in $\Oh^\la\Oh^{1^i}$, $\Oh^\la\Oh^j$, and $\Oh^\la\Oh^{1^i}\Oh^j$.  The shape of $q\Oh^\la$ is not $\la$ with a vertical or horizontal strip added, so by the row and column Pieri rules the $\Oh^\la\Oh^{1^i}$ and $\Oh^\la\Oh^j$ terms contribute nothing and we focus on $\Oh^\la\Oh^{1^i}\Oh^j$.  If $a=0$ or $b=0$, then $\hkab$ is a horizontal or vertical strip, so there are no $\Oh^\la\Oh^{1^i}\Oh^j$ terms and $C_{m,n}(\la,a,b)=0$.  If $a,b\geq1$, then by Lemma \ref{lemma:lincombo}, it suffices to show that the coefficient of $q\Oh^\la$ in $\Oh^\la\Oh^{1^i}\Oh^j$ is $(-1)^{n-i-j}\binom{t-1}{m-i}\binom{t-1}{n-m-j}$.  
	
	The shape of $q\Oh^\la$ is $\la$ with a vertical strip of $m$ boxes and a horizontal strip of $n-m$ boxes added on.  Therefore, the only way to create this shape from $\la$ using one application of the row Pieri rule and one application of the column Pieri rule is if both are introducing a strip of maximum length.
	
	When applying the column Pieri rule, we need to find how many nonempty columns the the vertical strip has.  Each such column contains exactly one corner box of $\la$ and every corner box of $\la$ gives rise to such a column, since we are adding a vertical strip of maximum length.  Thus there are $t$ nonempty columns, so the multiplication by $\Oh^{1^i}$ contributes a factor of $(-1)^{m-i}\binom{t-1}{m-i}$.  Similarly, by associating a row with the unique corner in that row, the multiplication by $\Oh^j$ contributes a factor of $(-1)^{n-m-j}\binom{t-1}{n-m-j}$.  This gives a coefficient of $(-1)^{n-i-j}\binom{t-1}{m-i}\binom{t-1}{n-m-j}$ on $q\Oh^\la$ in $\Oh^\la\Oh^{1^i}\Oh^j$, as desired.
\end{proof}

\begin{cor}\label{cor:taut}
	$$C_{t,2t}(\rho_t,a,b)=\sum\limits_{i=1}^a\sum\limits_{j=1}^b(-1)^{i+j+1}\binom{a-i+b-j}{a-i}\binom{t-1}{i-1}\binom{t-1}{j-1}$$
\end{cor}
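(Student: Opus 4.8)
The plan is to obtain Corollary \ref{cor:taut} as a direct specialization of Lemma \ref{lemma:longformula} to the case $m=t$, $n=2t$, and $\la=\rho_t$. The only input needed beyond substitution is the count of quantum corners of the staircase, so I would begin by verifying that $\rho_t$ has exactly $t$ quantum corners. Writing $\la^\vee_i=(n-m)-\la_{m+1-i}$ for the dual (the complement of $\la$ in the $t\times t$ square rotated by $180^\circ$, consistent with the examples in Figure \ref{fig:quantumcorners}), the staircase $\rho_t=(t-1,t-2,\dots,1,0)$ has dual $\rho_t^\vee=(t,t-1,\dots,2,1)$. The classical shape $\rho_t$ has a corner in each of its $t-1$ nonempty rows, while $\rho_t^\vee$ has a corner in each of its $t$ nonempty rows, so the number of quantum corners is $\max(t-1,t)=t$, in agreement with Figure \ref{fig:rhoexample}.

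With the corner count established, I would substitute $m=t$, $n=2t$, $n-m=t$, and the quantum-corner parameter $=t$ directly into the expression from Lemma \ref{lemma:longformula}, yielding
\[
C_{t,2t}(\rho_t,a,b)=\sum_{i=1}^a\sum_{j=1}^b(-1)^{2t-i-j-1}\binom{a-i+b-j}{a-i}\binom{t-1}{t-i}\binom{t-1}{t-j}.
\]
Two elementary simplifications then finish the argument. First, since $2t$ is even, $(-1)^{2t-i-j-1}=(-1)^{i+j+1}$. Second, the binomial symmetries $\binom{t-1}{t-i}=\binom{t-1}{i-1}$ and $\binom{t-1}{t-j}=\binom{t-1}{j-1}$ convert the last two factors into precisely the stated form.

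This argument involves no genuine obstacle; it is pure bookkeeping. The only step requiring a moment's care is confirming that $\rho_t$ has $t$ rather than $t-1$ quantum corners, since the naive classical corner count of the staircase gives $t-1$ and it is the dual $\rho_t^\vee$ that supplies the extra corner. I would state this explicitly so that the substitution of $t$ for the quantum-corner parameter of Lemma \ref{lemma:longformula} is fully justified.
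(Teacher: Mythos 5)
Your proposal is correct and follows exactly the paper's route: the paper also proves Corollary \ref{cor:taut} by substituting $m=t$, $n=2t$ into Lemma \ref{lemma:longformula} and using that $\rho_t$ has $t$ quantum corners. Your extra care in verifying the quantum corner count via the dual $\rho_t^\vee$ (matching Figure \ref{fig:rhoexample}) and in simplifying the sign and binomial symmetries is just a more detailed writeup of the same argument.
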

\begin{proof}
	$\rho_t$ has $t$ corners, and $m=t,n=2t$.  This corollary then follows directly from Lemma \ref{lemma:longformula}.
\end{proof}

We are now ready to recall and prove Theorem \ref{thm:main1}.

\begin{theorem2*}
	Let $\la$ be a Young diagram with $t$ quantum corners, and let $0\leq a\leq m$, $0\leq b\leq n-m$.  Then there is a reduction:
	
	\[
	C_{m,n}(\lambda,a,b) = C_{t,2t}(\rho_t,a-m+t,b-n+m+t)
	\]
\end{theorem2*}

\begin{proof}
	Set $\alpha=a-m+t$ and $\beta=b-n+m+t$.  The equation we need to prove becomes $C_{m,n}(\la,\alpha+m-t,\beta+n-m-t)=C_{t,2t}(\rho_t,\al,\be)$, and the bounds for $\al$ and $\be$ are $-m+t\leq\al\leq t$ and $-n+m+t\leq\be\leq t$.  Stating the result of Lemma $\ref{lemma:longformula}$ in terms of $\al$ and $\be$:
	
	\begin{align*}
		C_{m,n}&(\la,\al+m-t,\be+n-m-t)\\
		& = \sum\limits_{i=1}^{\al+m-t}\sum\limits_{j=1}^{\be+n-m-t}(-1)^{n-i-j-1}\binom{\al-i+\be-j+n-2t}{\al+m-t-i}\binom{t-1}{m-i}\binom{t-1}{n-m-j}\\
	\end{align*}
	
	If $1\leq i\leq m-t$, then $t-1<m-i$ and thus $\binom{t-1}{m-i}=0$.  Similarly, if $1\leq j\leq n-m-t$, $\binom{t-1}{n-m-j}=0$.  This means the lower bounds on the sum can be replaced with $i=m-t+1,j=n-m-t+1$.  If $\al\leq0$ or $\be\leq0$, the sum is now empty, so $C_{m,n}(\la,\al+m-t,\be+n-m-t)=0$.  In that case, $C_{t,2t}(\rho_t,\al,\be)=0$ as well, so the result is proven.
	
	If the sum is not empty, after replacing the lower bounds we change variables $k=i-m+t,l=j-n+m+t$.
	
	\begin{align*}
		C&_{m,n}(\la,\al+m-t,\be+n-m-t)\\
		& =  \sum\limits_{i=m-t+1}^{\al+m-t}\sum\limits_{j=n-m-t+1}^{\be+n-m-t}(-1)^{n-i-j-1}\binom{\al-i+\be-j+n-2t}{\al+m-t-i}\binom{t-1}{m-i}\binom{t-1}{n-m-j}\\
		& = \sum\limits_{k=1}^\al\sum\limits_{l=1}^\be(-1)^{-k-l+2t-1}\binom{\al-k+\be-l}{a-i}\binom{t-1}{t-k}\binom{t-1}{t-l}\\
		& = \sum\limits_{k=1}^\al\sum\limits_{l=1}^\be(-1)^{k+l+1}\binom{\al-k+\be-l}{a-k}\binom{t-1}{k-1}\binom{t-1}{l-1}\\
	\end{align*}
	
	The last line above is exactly equal to $C_{t,2t}(\rho_t,\al,\be)$ as found in Corollary \ref{cor:taut}, so we are done.
\end{proof}

Theorem \ref{thm:main1} is equivalent to the following statement: 

\begin{remark}
	\label{rem:reduction}
	Suppose $\la$ has $t$ corners, where $t<\max(m,m-n)$.  Then we can remove a repeated row or column from $\la$ to get a new shape $\la'$.  Set $n'=n-1$.  If we removed a row, set $m'=m-1$, $a'=a-1$, and $b'=b$, and if we removed a column set $m'=m$, $a'=a$, and $b'=b-1$.  Then $C_{m,n}(\la,a,b)=C_{m',n'}(\la',a',b')$.
\end{remark}

\begin{figure}[H]
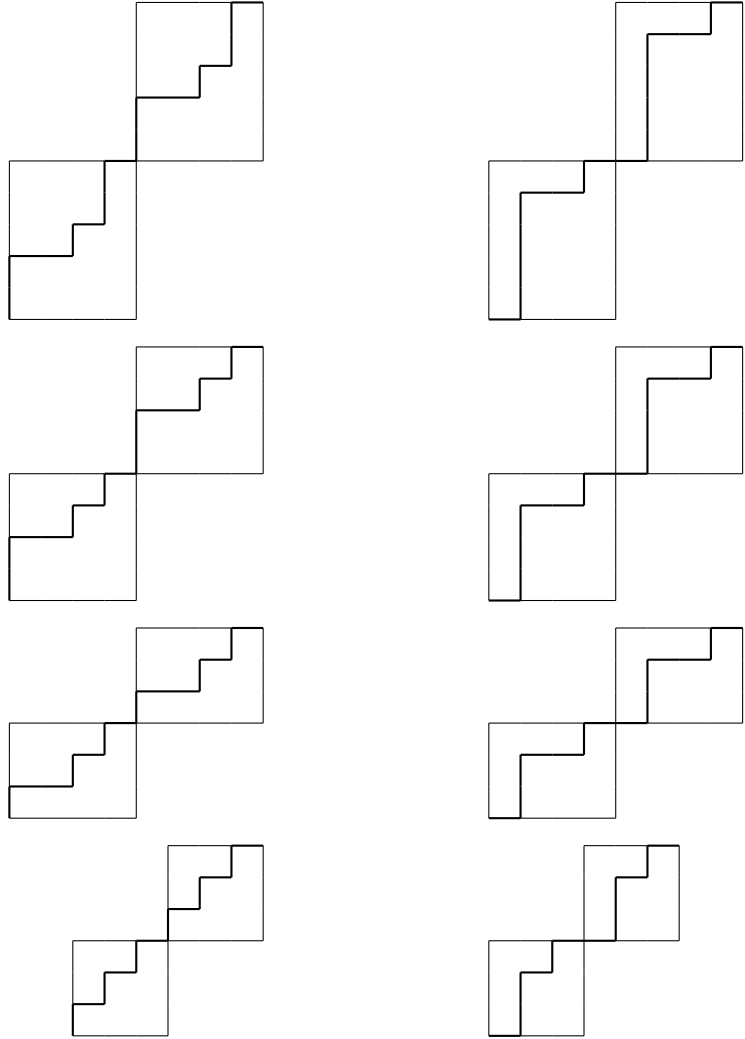

	\[ \tableau{12}
	{	[]& []& []& []& [tl]& [t]& [tR]& [Tr]\\
			[]& []& []& []& [l]& []& [R]& [r]\\
			[]& []& []& []& [l]& [R]& [T]& [r]\\
			[]& []& []& []& [LT]& [T]& []& [r]\\
			[]& []& []& []& [Lb]& [b] &[b]&[br]\\
			[tl]& [t]& [tR]& [Tr]& []& []& []& []\\
			[l]& []& [R]& [r]& []& []& []& []\\
			[l]& [R]& [T]& [r]& []& []& []& []\\
			[LT]& [T]& []& [r]& []& []& []& []\\
			[Lb]& [b] &[b]&[br]& []& []& []& []
		}
	\hspace{3cm}
	\tableau{12}
	{	[]& []& []& []& [tl]& [t]& [tR]& [Tr]\\
		[]& []& []& []& [l]& [LT]& [T]& [r]\\
		[]& []& []& []& [l]& [L]& []& [r]\\
		[]& []& []& []& [lR]& []& []& [r]\\
		[]& []& []& []& [lBR]& [b] &[b]&[br]\\
		[tl]& [t]& [tR]& [Tr]& []& []& []& []\\
		[l]& [LT]& [T]& [r]& []& []& []& []\\
		[l]& [L]& []& [r]& []& []& []& []\\
		[lR]& []& []& [r]& []& []& []& []\\
		[lBR]& [b] &[b]&[br]& []& []& []& []
	} \]

	\[ \tableau{12}
	{	[]& []& []& []& [tl]& [t]& [tR]& [Tr]\\
			[]& []& []& []& [l]& [R]& [T]& [r]\\
			[]& []& []& []& [LT]& [T]& []& [r]\\
			[]& []& []& []& [Lb]& [b] &[b]&[br]\\
			[tl]& [t]& [tR]& [Tr]& []& []& []& []\\
			[l]& [R]& [T]& [r]& []& []& []& []\\
			[LT]& [T]& []& [r]& []& []& []& []\\
			[Lb]& [b] &[b]&[br]& []& []& []& []
		}	
	\hspace{3cm}
	\tableau{12}
	{	[]& []& []& []& [tl]& [t]& [tR]& [Tr]\\
		[]& []& []& []& [l]& [LT]& [T]& [r]\\
		[]& []& []& []& [lR]& []& []& [r]\\
		[]& []& []& []& [lBR]& [b] &[b]&[br]\\
		[tl]& [t]& [tR]& [Tr]& []& []& []& []\\
		[l]& [LT]& [T]& [r]& []& []& []& []\\
		[lR]& []& []& [r]& []& []& []& []\\
		[lBR]& [b] &[b]&[br]& []& []& []& []
	} \]

	\[ \tableau{12}
	{	[]& []& []& []& [tl]& [t]& [tR]& [Tr]\\
		[]& []& []& []& [l]& [R]& [T]& [r]\\
		[]& []& []& []& [LbT]& [Tb] &[b]&[br]\\
		[tl]& [t]& [tR]& [Tr]& []& []& []& []\\
		[l]& [R]& [T]& [r]& []& []& []& []\\
		[LbT]& [Tb] &[b]&[br]& []& []& []& []
	}	
	\hspace{3cm}
	\tableau{12}
	{	[]& []& []& []& [tl]& [t]& [tR]& [Tr]\\
		[]& []& []& []& [l]& [LT]& [T]& [r]\\
		[]& []& []& []& [lBR]& [b] &[b]&[br]\\
		[tl]& [t]& [tR]& [Tr]& []& []& []& []\\
		[l]& [LT]& [T]& [r]& []& []& []& []\\
		[lBR]& [b] &[b]&[br]& []& []& []& []
	} \]
	
	\[ \tableau{12}
	{	[]& []& []& [tl]& [tR]& [Tr]\\
		[]& []& []& [lR]& [T]& [r]\\
		[]& []& []& [LbT]& [b]& [br]\\
		[tl]& [tR]& [Tr]& []& []& []\\
		[lR]& [T]& [r]& []& []& []\\
		[LbT]& [b]& [br] & []& []& []
	}	
	\hspace{3cm}
	\tableau{12}
	{	[]& []& []& [tl]& [tR]& [Tr]\\
		[]& []& []& [l]& [LT]& [r]\\
		[]& []& []& [lBR]& [b]&[br]\\
		[tl]& [tR]& [Tr]& []& []& []\\
		[l]& [LT]& [r]& []& []& []\\
		[lBR]& [b]&[br]& []& []& []
	} \]
	
	\caption{We consider the setup in Remark \ref{rem:reduction} where $m=5$, $n=9$, $\la=(3,3,2)$, $t=3$, $a=4$, $b=2$.  The diagrams on the left show successive stages of removing a repeated row or column from $\la$, and the diagrams on the right correspondingly remove a row or column from $(a\backslash b)$.  This process terminates when $\la$ has become $\rho_3$ and $(a\backslash b)$ has become $(a-m+t,b-n+m+t)$.}
	\label{fig:reduction}
\end{figure}

Intuitively, we have removed a row or column from the $m$ by $n-m$ rectangle that $\la$ lives in, as well as from the hook $\hkab$.  $\la$ and $\la'$ both have the same number of quantum corners $t$, and by construction $a-m+t=a'-m'+t$ and $b-n+m+t=b'-n'+m'+t$.  Thus Theorem \ref{thm:main1} implies Remark \ref{rem:reduction}, because both coefficients are equal to $C_{t,2t}(\rho_t,a-m+t,b-n+m+t)$.  

Figure \ref{fig:reduction} shows how Remark \ref{rem:reduction} implies Theorem \ref{thm:main1}, since the process of removing a row or column from $\la$, from the $m$ by $n-m$ rectangle, and from $\hkab$ can be repeated until $\la$ has been reduced to $\rho_t$.

\section{Proof of Theorem \ref{thm:main2}}

From this point on, we will write $C_{t,2t}(\rho_t,a,b)$ as $c(t,a,b)$ for simplicity.  Recall the statement of Theorem \ref{thm:main2}:

\begin{theorem3*}
	
	Let $t\geq1$, and let $0\leq a,b\leq t$.  Then:
	
	\[
	c(t,a,b)=(-1)^{a+b+1}\sum\limits_{i=1}^{\min(a,b)}\binom{t-1-i}{a-i}\binom{t-1-i}{b-i}\\
	\]
	
\end{theorem3*}

We need to turn the double alternating sum from Corollary \ref{cor:taut} into a manifestly positive formula.  This requires treating binomial coefficients as polynomials in their own right.  The definition $\binom{x}{k}=\frac{x(x-1)...(x-r+1)}{k!}$ is a degree $k$ polynomial with rational coefficients for any $k\in\N$.  In particular, $x$ can be negative, and the identity $\binom{-x}{k}=(-1)^k\binom{x+k-1}{k}$ relates negative to positive values of this polynomial.  Any binomial coefficient identity proven in a way that assumes $x\in\N$ is still true for other $x$ values, because polynomials that are equal at infinitely many inputs are equal everywhere.  The following lemma can then be proven via combinatorial argument:

\begin{lemma}\label{lemma:kittens}
	For $n,m\in\Z$ and $k\geq0$
	
	\[\binom{n}{k}=\sum\limits_{j=0}^k(-1)^j\binom{n+m-j}{k-j}\binom{m}{j}\]
\end{lemma}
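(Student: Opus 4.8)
The plan is to reduce to the case of nonnegative integers and then give an inclusion--exclusion argument. As noted in the paragraph preceding the lemma, for fixed $k$ both sides are polynomials in $n$ and $m$: the left side $\binom{n}{k}$ has degree $k$ in $n$, while on the right each summand $\binom{n+m-j}{k-j}\binom{m}{j}$ is a polynomial of degree $(k-j)+j=k$, so the right side is a polynomial of degree at most $k$ in the two variables $n,m$. Two polynomials in two variables that agree on all of $\N^2$ must coincide: fixing $m=m_0\in\N$, agreement on $n\in\N$ forces equality of the coefficients in $n$, each of which is itself a polynomial in $m$ that then vanishes on all of $\N$, hence identically. It therefore suffices to prove the identity for all integers $n,m\geq0$.

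So I fix $n,m,k\geq0$ and consider a set $E$ of $n+m$ elements partitioned into a block $G$ of $n$ \emph{good} elements and a block $B$ of $m$ \emph{bad} elements. I count the $k$-element subsets of $E$ that are disjoint from $B$ in two ways. Directly, such subsets are exactly the $k$-subsets of $G$, of which there are $\binom{n}{k}$, the left-hand side. Alternatively, I apply inclusion--exclusion over the bad elements: for a fixed $S\subseteq B$ with $|S|=j$, the number of $k$-subsets of $E$ containing $S$ is $\binom{n+m-j}{k-j}$, obtained by choosing the remaining $k-j$ elements from the other $n+m-j$ elements, and there are $\binom{m}{j}$ such $S$. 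Hence the number of $k$-subsets meeting $B$ trivially is
\[
\sum_{j=0}^{k}(-1)^j\binom{m}{j}\binom{n+m-j}{k-j},
\]
which is the right-hand side. Terms with $j>k$ contribute $0$ since $\binom{n+m-j}{k-j}=0$ when $k-j<0$, and terms with $j>m$ vanish since $\binom{m}{j}=0$, so truncating the inclusion--exclusion sum at $j=k$ is harmless and the two counts agree.

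I expect the only real subtlety to be the two-variable polynomial extension in the first step, since the combinatorial argument makes sense only for nonnegative $n$ and $m$ while the lemma is asserted for all $n,m\in\Z$; care is needed to confirm that a polynomial identity in two variables is forced by agreement on the grid $\N^2$ rather than merely on a line, which is why I argue coefficient-by-coefficient above. The inclusion--exclusion bookkeeping is routine once the good/bad partition is fixed, and the sign $(-1)^j$ together with the index range emerges automatically from the boundary conventions on the binomial coefficients.
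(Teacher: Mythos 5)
Your proof is correct and takes essentially the same approach as the paper: the paper's argument counts choices of $k$ short-tailed kittens among $n$ short-tailed and $m$ long-tailed ones by inclusion--exclusion over the long-tailed kittens, which is exactly your good/bad-element count, and it likewise invokes polynomiality (stated in the paragraph preceding the lemma) to pass from $n,m\in\N$ to $n,m\in\Z$. Your only addition is spelling out the two-variable coefficient-by-coefficient argument for that extension, which the paper leaves implicit.
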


\begin{proof}
	Suppose there are $n$ short-tailed kittens and $m$ long-tailed kittens, and you wish to choose $k$ short-tailed kittens.  Certainly there are $\binom{n}{k}$ ways to do so.  Another way to count it is to take the $\binom{n+m}{k}$ ways to choose $k$ kittens irrespective of tail length, and subtract the number of ways to choose $k$ kittens where at least one has a long tail.  To compute this, we will use the principle of inclusion-exclusion.
	
	Let $K_i$ be the set of all choices of $k$ kittens which include the $i$th long-tailed kitten.  Then $\bigcup\limits_{i=1}^mK_i$ is the set of all choices of $k$ kittens which include any long-tailed kitten, so $\binom{n}{k}=\binom{n+m}{k}-\left|\bigcup\limits_{i=1}^mK_i\right|$.  By the principle of inclusion-exclusion:
	
	\[\left|\bigcup\limits_{i=1}^mK_i\right|=\sum\limits_{j=1}^m(-1)^{j+1}\left(\sum\limits_{1\leq i_1<...<i_j\leq m}\left|K_{i_1}\cap...\cap K_{i_j}\right|\right)\]
	
	Each intersection $K_{i_1}\cap...\cap K_{i_j}$ describes the number of ways to choose $k$ kittens with $j$ of them fixed, so the size of each is $\binom{n+m-j}{k-j}$.  There are $\binom{m}{j}$ choices of $i_1,...,i_j$, so the inner sum evaluates to $\binom{n+m-j}{k-j}\binom{m}{j}$, and thus:
	
	\[\binom{n}{k}=\binom{n+m}{k}-\sum\limits_{j=1}^m(-1)^{j+1}\binom{n+m-j}{k-j}\binom{m}{j}\]
	
	Since $\binom{n+m-j}{k-j}=0$ for $j>k$, the top bound of the summation above can be $k$ instead of $m$ without changing anything, so the right hand side of the equation is equal to $\sum\limits_{j=0}^k(-1)^j\binom{n+m-j}{k-j}\binom{m}{j}$ as desired.
\end{proof}

We can now turn the double alternating sum from Theorem \ref{thm:main2} into a single alternating sum as follows.

\begin{lemma}\label{lemma:outersum}
	For $t\geq1$ and $a,b\geq0$, $$c(t,a,b)=\sum\limits_{i=1}^a(-1)^{b+i+1}\binom{t-1}{i-1}\binom{t-2-a+i}{b-1}$$
\end{lemma}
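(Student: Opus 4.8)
The plan is to start from the double alternating sum in \Corollary{cor:taut} and collapse the inner summation over $j$ using the binomial identity of \Lemma{lemma:kittens}, leaving a single alternating sum over $i$. Since $(-1)^{i+j+1}=(-1)^{i+1}(-1)^{j}$, I would first regroup the expression from \Corollary{cor:taut} as
$$c(t,a,b)=\sum_{i=1}^a(-1)^{i+1}\binom{t-1}{i-1}\left(\sum_{j=1}^b(-1)^{j}\binom{a-i+b-j}{a-i}\binom{t-1}{j-1}\right),$$
isolating the parenthesized inner sum. The whole lemma then reduces to the single claim that this inner sum equals $(-1)^b\binom{t-2-a+i}{b-1}$: substituting that back and using $(-1)^{i+1}(-1)^b=(-1)^{b+i+1}$ yields the stated formula of \Lemma{lemma:outersum} verbatim.

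To evaluate the inner sum I would reindex by $k=j-1$, turning it into $-\sum_{k=0}^{b-1}(-1)^{k}\binom{a-i+b-1-k}{a-i}\binom{t-1}{k}$. The key manipulation is to rewrite the first factor in its complementary form $\binom{a-i+b-1-k}{a-i}=\binom{a-i+b-1-k}{b-1-k}$, so that its lower index now decreases with $k$. This is precisely the shape of the right-hand side of \Lemma{lemma:kittens}: matching the lemma's summation variable with $k$, its upper summation bound with $b-1$, and its parameter $m$ with $t-1$ forces $n=a-i+b-t$, and the sum $\sum_{k=0}^{b-1}(-1)^k\binom{a-i+b-1-k}{b-1-k}\binom{t-1}{k}$ collapses to $\binom{a-i+b-t}{b-1}$.

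It then remains to convert $-\binom{a-i+b-t}{b-1}$ into $(-1)^b\binom{t-2-a+i}{b-1}$. Here I would invoke the negation identity recorded just before \Lemma{lemma:kittens}, applied in the form $\binom{X}{b-1}=(-1)^{b-1}\binom{b-2-X}{b-1}$ with $X=a-i+b-t$; since $b-2-X=t-2-a+i$, this gives $\binom{a-i+b-t}{b-1}=(-1)^{b-1}\binom{t-2-a+i}{b-1}$, and the leading minus sign promotes $(-1)^{b-1}$ to the desired $(-1)^b$. Combining this with the regrouping of the first paragraph finishes the proof.

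The main obstacle is not conceptual but bookkeeping: correctly matching the three parameters of \Lemma{lemma:kittens} against the reindexed inner sum, and tracking the several sign changes arising from the reindexing $k=j-1$, the complementary rewriting, and the negation identity. One point deserves care: the negation identity must be used in its polynomial (rather than purely combinatorial) interpretation, because $a-i+b-t$ may be negative; the remark preceding \Lemma{lemma:kittens} licenses exactly this, as the binomial identities involved hold as identities of polynomials in the upper argument and therefore at all integer values.
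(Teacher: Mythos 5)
Your proof is correct and follows essentially the same route as the paper's: regroup the double sum from \Corollary{taut}, collapse the inner sum via \Lemma{kittens} with $n=a-i+b-t$, $m=t-1$, $k=b-1$, and convert the result with the negation identity $\binom{-x}{k}=(-1)^k\binom{x+k-1}{k}$; your sign bookkeeping checks out. The only (trivial) difference is that the paper first disposes of the case $b=0$ separately, since \Lemma{kittens} requires $k\geq 0$; in that case both sides of the lemma vanish, so nothing is lost.
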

\begin{proof}
	If $b=0$, both sides of the above are equal to 0, so we can assume $b\geq1$.  Starting with the expression from Corollary \ref{cor:taut}, we have:
	
	\begin{align*}
		c(t,a,b) &= \sum\limits_{i=1}^a\sum\limits_{j=1}^b(-1)^{i+j+1}\binom{a-i+b-j}{a-i}\binom{t-1}{i-1}\binom{t-1}{j-1}\\
		&= \sum\limits_{i=1}^a(-1)^i\binom{t-1}{i-1}\sum\limits_{j=1}^b(-1)^{j+1}\binom{a-i+b-j}{b-j}\binom{t-1}{j-1}\\
		&= \sum\limits_{i=1}^a(-1)^i\binom{t-1}{i-1}\sum\limits_{j=0}^{b-1}(-1)^j\binom{a-i+b-1-j}{b-1-j}\binom{t-1}{j}\\
	\end{align*}

	It therefore suffices to show:
	
	\begin{equation}\label{eqn:singlesum}
		(-1)^{b+1}\binom{t-2-a+i}{b-1}=\sum\limits_{j=0}^{b-1}(-1)^j\binom{a-i+b-1-j}{b-1-j}\binom{t-1}{j}
	\end{equation}
	
	We apply the result of Lemma \ref{lemma:kittens} with with $n=a-i+b-t$ (which may be negative), $m=t-1$, and $k=b-1$ (which is nonnegative since $b\geq1)$, and then use the identity that relates negative to positive values of binomial coefficients:
	
	\[\binom{a-i+b-t}{b-1}=\sum\limits_{j=0}^{b-1}(-1)^j\binom{a-i+b-1-j}{b-1-j}\binom{t-1}{j}\]

	By the identity that relates positive to negative values of binomial coefficients, the above is equivalent to Equation \ref{eqn:singlesum}, which proves the lemma.
\end{proof}

The expression from Lemma \ref{lemma:outersum} is now a single alternating sum, so to end with a manifestly positive formula it remains to remove the alternation.  To do this, we will introduce two new functions $f$ and $g$ and a dummy variable $r$.  The dummy variable will be necessary for an inductive proof to work correctly, but only the $r=0$ case is directly relevant to proving Theorem \ref{thm:main2}.

\begin{defn}\label{def:newparam}
	For $t,a,b,r\in\Z$ with $a\geq2$ and $b\geq1$, define:
	
	\begin{align*}
		f(t,a,b,r) &:= \sum\limits_{i=1}^{a-1}(-1)^{a+i}\binom{t-1}{i-1}\binom{t-2-a+r+i}{b-1}\\
		g(t,a,b,r) &:= -\binom{t-2}{a-2}\binom{t-2+r}{b-1}+\sum\limits_{i=1}^{a-1}(-1)^{a+i+1}\binom{t-2}{i-1}\binom{t-2-a+r+i}{b-2}
	\end{align*}

\end{defn}

\begin{remark}\label{rem:cf}
	By Lemma \ref{lemma:outersum}, \[c(t,a,b)=(-1)^{a+b+1}\binom{t-1}{a-1}\binom{t-2}{b-1}+(-1)^{a+b+1}f(t,a,b,0)\]
\end{remark}

\begin{lemma}\label{lemma:newparam}
	$f(t,a,b,r)=g(t,a,b,r)$ on the entire domain.
\end{lemma}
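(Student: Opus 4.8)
The plan is to prove $f=g$ by direct algebraic manipulation of the difference $f-g$, treating every binomial coefficient as a polynomial in its upper argument (as described before Lemma~\ref{lemma:kittens}), so that Pascal's rule $\binom{x}{k}=\binom{x-1}{k}+\binom{x-1}{k-1}$ is available as a polynomial identity for all integer arguments, including negative ones. No induction is needed here: the identity collapses through two applications of Pascal's rule followed by a single telescoping cancellation. Since $-(-1)^{a+i+1}=(-1)^{a+i}$, forming $f-g$ immediately merges the two alternating sums, giving $f-g=\binom{t-2}{a-2}\binom{t-2+r}{b-1}+\sum_{i=1}^{a-1}(-1)^{a+i}\left[\binom{t-1}{i-1}\binom{t-2-a+r+i}{b-1}+\binom{t-2}{i-1}\binom{t-2-a+r+i}{b-2}\right]$, and the whole problem becomes showing this expression vanishes.

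First I would expand the factor $\binom{t-1}{i-1}$ using Pascal's rule in the top argument, writing $\binom{t-1}{i-1}=\binom{t-2}{i-1}+\binom{t-2}{i-2}$. This splits the bracketed term into a part carrying $\binom{t-2}{i-1}$ and a part carrying $\binom{t-2}{i-2}$. The $\binom{t-2}{i-1}$ part now pairs with the surviving $g$-contribution $\binom{t-2}{i-1}\binom{t-2-a+r+i}{b-2}$, and the sum $\binom{t-2-a+r+i}{b-1}+\binom{t-2-a+r+i}{b-2}$ invites a second application of Pascal's rule, this time in the lower argument: it equals $\binom{t-1-a+r+i}{b-1}$. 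This collapses everything attached to $\binom{t-2}{i-1}$ into the single clean sum $S_1=\sum_{i=1}^{a-1}(-1)^{a+i}\binom{t-2}{i-1}\binom{t-1-a+r+i}{b-1}$.

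Next I would handle the leftover piece $S_2=\sum_{i=1}^{a-1}(-1)^{a+i}\binom{t-2}{i-2}\binom{t-2-a+r+i}{b-1}$ by reindexing with $j=i-1$. The $i=1$ term drops out because $\binom{t-2}{-1}=0$, so the shifted sum runs over $j=1,\dots,a-2$ and becomes $-\sum_{j=1}^{a-2}(-1)^{a+j}\binom{t-2}{j-1}\binom{t-1-a+r+j}{b-1}$, which is exactly the negative of $S_1$ truncated one step early. Adding $S_1+S_2$, every summand cancels except the top term $i=a-1$ of $S_1$, which evaluates (using $(-1)^{2a-1}=-1$) to $-\binom{t-2}{a-2}\binom{t-2+r}{b-1}$. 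This leftover is precisely the negative of the standalone term $\binom{t-2}{a-2}\binom{t-2+r}{b-1}$ sitting in front, so the two annihilate and $f-g=0$.

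The main obstacle is purely bookkeeping: tracking the interleaved signs $(-1)^{a+i}$ and $(-1)^{a+i+1}$ through both Pascal expansions and the index shift, and verifying that the boundary behaves correctly — in particular that the $i=1$ summand of the $\binom{t-2}{i-2}$ piece really drops out, and that the degenerate case $b=1$ (where $\binom{t-2-a+r+i}{b-2}=\binom{\,\cdot\,}{-1}=0$) remains consistent with the second Pascal step. I would also state explicitly that the final equality is an identity of polynomials in $t$ and $r$, so it continues to hold for the negative arguments that arise in the later inductive application, which is exactly what justifies the polynomial reading of Pascal's rule used throughout.
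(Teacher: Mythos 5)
Your proof is correct, and it takes a genuinely different route from the paper's. The paper proves $f=g$ by induction on $a$: it peels off the $i=a-1$ term of $f$ to obtain the recursion $f(t,a,b,r)=-\binom{t-1}{a-2}\binom{t-3+r}{b-1}-f(t,a-1,b,r-1)$, invokes the inductive hypothesis at $(a-1,\,r-1)$, and reassembles $g$ using the Pascal-type identity $-\binom{n-1}{k}=-\binom{n}{k}+\binom{n-1}{k-1}$; note that the induction genuinely needs the dummy variable $r$ to vary, since the hypothesis is applied at $r-1$ --- that is exactly why $r$ appears in Definition~\ref{def:newparam}. Your argument instead fixes all parameters and verifies the identity directly: merge $f-g$ into one sum, split $\binom{t-1}{i-1}=\binom{t-2}{i-1}+\binom{t-2}{i-2}$, recombine $\binom{t-2-a+r+i}{b-1}+\binom{t-2-a+r+i}{b-2}=\binom{t-1-a+r+i}{b-1}$, and cancel the two resulting sums against each other via the shift $j=i-1$, leaving only the $i=a-1$ term, which annihilates the standalone term of $g$. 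I checked the boundary behavior you flagged: the $i=1$ term of the $\binom{t-2}{i-2}$ sum vanishes because $\binom{t-2}{-1}=0$; when $b=1$ the second Pascal step degenerates to $\binom{x+1}{0}=\binom{x}{0}+0$ and remains valid; and when $a=2$ the telescoping collapses to the single term $-\binom{t-2+r}{b-1}$, which still cancels the loose term, in agreement with the paper's base case. What your approach buys: it is shorter, needs no induction, and --- since it works for each fixed $r$ --- it renders the dummy variable superfluous, so with your proof the paper could state the lemma at $r=0$ only (all that Remark~\ref{rem:cf} and the proof of Theorem~\ref{thm:main2} ever use) and strip $r$ out of Definition~\ref{def:newparam} entirely. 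What the paper's induction buys is mainly uniformity with the inductive style of the rest of the section, at the cost of carrying that auxiliary parameter through every statement.
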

\begin{proof}
	We show this by induction on $a$.  In the $a=2$ base case, we have:
	
	\begin{align*}
		f(t,2,b,r) &= -\binom{t-1}{0}\binom{t-3+r}{b-1}\\
		&=-\binom{t-3+r}{b-1} \\
		g(t,2,b,r) &= -\binom{t-2}{0}\binom{t-2+r}{b-1}+\binom{t-2}{0}\binom{t-3+r}{b-2} \\
		&= -\binom{t-2+r}{b-1}+\binom{t-3+r}{b-2}
	\end{align*}
	
	These are equal because $-\binom{n-1}{k}=-\binom{n}{k}+\binom{n-1}{k-1}$ for any $n,k$.  Now inductively assume that $f(t,a-1,b,r)=g(t,a-1,b,r)$ for every $t,b,r$ and some particular $a\geq3$.  We extract the $i=a-1$ term of the sum from $f$ to get:
	
	\begin{align*}
			f(t,a,b,r) & =
			\sum\limits_{i=1}^{a-1}(-1)^{a+i}\binom{t-1}{i-1}\binom{t-2-a+r+i}{b-1}\\
			& = -\binom{t-1}{a-2}\binom{t-3+r}{b-1}-\sum\limits_{i=1}^{a-2}(-1)^{a-1+i}\binom{t-1}{i-1}\binom{t-2-a+r+i}{b-1}\\
			& = -\binom{t-1}{a-2}\binom{t-3+r}{b-1}-f(t,a-1,b,r-1)\\
	\end{align*}

	Now by the inductive hypothesis, $f(t,a-1,b,r-1)=g(t,a-1,b,r-1)$.  We use the same binomial identity as in the base case to combine the two terms outside the large sum and then split them up differently to reach $g$.
	
	\begin{align*}
		f(t,a,b,r) & = -\binom{t-1}{a-2}\binom{t-3+r}{b-1}-g(t,a-1,b,r-1)\\
		& = -\binom{t-1}{a-2}\binom{t-3+r}{b-1}+\binom{t-2}{a-3}\binom{t-3+r}{b-1}\\
		& \phantom{==}-  \sum\limits_{i=1}^{a-2}(-1)^{a+i}\binom{t-2}{i-1}\binom{t-2-a+r+i}{b-2}\\
		& = -\binom{t-2}{a-2}\binom{t-3+r}{b-1}+\sum\limits_{i=1}^{a-2}(-1)^{a+i+1}\binom{t-2}{i-1}\binom{t-2-a+r+i}{b-2}\\
		& = -\binom{t-2}{a-2}\binom{t-2+r}{b-1}+\binom{t-2}{a-2}\binom{t-3+r}{b-2}\\
		& \phantom{==} +\sum\limits_{i=1}^{a-2}(-1)^{a+i+1}\binom{t-2}{i-1}\binom{t-2-a+r+i}{b-2}\\
		& = -\binom{t-2}{a-2}\binom{t-2+r}{b-1}+\sum\limits_{i=1}^{a-1}(-1)^{a+i+1}\binom{t-2}{i-1}\binom{t-2-a+r+i}{b-2}\\
		& = g(t,a,b,r)\\
	\end{align*}

	This completes the induction and proves the lemma.
\end{proof}

We are now ready to finish the proof of Theorem \ref{thm:main2}.

\begin{proof}(Theorem \ref{thm:main2})
	The formula for $c(t,a,b)$ given in Corollary \ref{cor:taut} is symmetric in $a$ and $b$, so without loss of generality we may assume $\min(a,b)=a$.  We will induct on $a$.  If $a=0$, both sides of the equation in Theorem \ref{thm:main2} are equal to 0.  If $a=1$, Lemma \ref{lemma:outersum} tells us:
	
	\begin{align*}
		c(t,1,b)=(-1)^b\binom{t-1}{0}\binom{t-2}{b-1}=(-1)^b\binom{t-2}{b-1}
	\end{align*}
	
	Which is also the result of plugging $a=1$ into the formula from Theorem \ref{thm:main2}.   Inductively assume that Theorem \ref{thm:main2} holds for some particular $a-1$ and any $t$ and $b$, so that:
	
	\begin{align*}
		c(t-1,a-1,b-1) &= (-1)^{a+b+1}\sum\limits_{i=1}^{a-1}\binom{t-2-i}{a-1-i}\binom{t-2-i}{b-1-i}\\
		&= (-1)^{a+b+1}\sum\limits_{i=2}^a\binom{t-1-i}{a-i}\binom{t-1-i}{b-i}\\
	\end{align*}

	Starting with the statement of Remark \ref{rem:cf} and using Lemma \ref{lemma:newparam}:
	
	\begin{align*}
		c(t,a,b) & = (-1)^{a+b+1}\binom{t-1}{a-1}\binom{t-2}{b-1}+(-1)^{a+b+1}f(t,a,b,0)\\	
		& = (-1)^{a+b+1}\binom{t-1}{a-1}\binom{t-2}{b-1}+(-1)^{a+b+1}g(t,a,b,0)\\	
		& = (-1)^{a+b+1}\binom{t-1}{a-1}\binom{t-2}{b-1}\\		
		&  \phantom{==}+(-1)^{a+b+1}\left(-\binom{t-2}{a-2}\binom{t-2}{b-1}+\sum\limits_{i=1}^{a-1}(-1)^{a+i+1}\binom{t-2}{i-1}\binom{t-2-a+i}{b-2}\right)\\	
	\end{align*}

	From here, we combine the two loose terms and use the inductive hypothesis, since the summation above is exactly $c(t-1,a-1,b-1)$ by Lemma \ref{lemma:outersum}.
	
	\begin{align*}
		c(t,a,b) & = (-1)^{a+b+1}\binom{t-2}{a-1}\binom{t-2}{b-1}+c(t-1,a-1,b-1)\\
		& = (-1)^{a+b+1}\binom{t-2}{a-1}\binom{t-2}{b-1}+(-1)^{a+b+1}\sum\limits_{i=2}^a\binom{t-1-i}{a-i}\binom{t-1-i}{b-i}\\
		& = (-1)^{a+b+1}\sum\limits_{i=1}^a\binom{t-1-i}{a-i}\binom{t-1-i}{b-i}\\	
	\end{align*}

	This proves the theorem.
\end{proof}

\section{A Relationship Between Classical and Quantum Structure Constants}

Theorem \ref{thm:main0} states that $N_{\la,\hkab}^{\la[1]}$ is the only structure constant in the product $\Oh^\la\Oh^\hkab$ that cannot be computed using the classical K-theoretic Littlewood-Richardson rule.  This rule is purely combinatorial and therefore manifestly positive.  This section of the paper takes inspiration from the classical Littlewood-Richardson rule to give a combinatorial interpretation of the coefficient $N_{\rho_t,\hkab}^{\rho_t[1]}=c(t,a,b)$ as well.

To state the classical K-theoretic Littlewood-Richardson rule, we must first set up some definitions.  If $\la$ and $\mu$ are Young diagrams, define the skew shape $\la\ast\mu$ to be the Young diagram given by placing the bottom left corner of $\mu$ on the top right corner of $\la$, as shown in Figure \ref{fig:laastmu}.

\begin{figure}[]
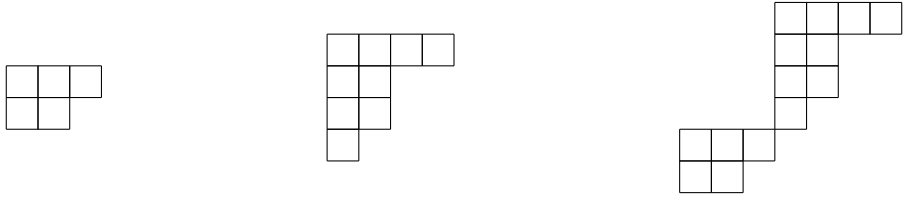

	\[ \tableau{12}
	{	[tlbr]& [tlbr]& [tlbr]\\
		[tlbr]& [tlbr]& []
	}
	\hspace{3cm}	
	\tableau{12}
	{	[tlbr]& [tlbr]& [tlbr]& [tlbr]\\
		[tlbr]& [tlbr]& []& []\\
		[tlbr]& [tlbr]& []& []\\
		[tlbr]& []& []& []
	}
	\hspace{3cm}	
	\tableau{12}
	{	[]& []& []& [tlbr]& [tlbr]& [tlbr]& [tlbr]\\
		[]& []& []& [tlbr]& [tlbr]& []& []\\
		[]& []& []& [tlbr]& [tlbr]& []& []\\
		[]& []& []& [tlbr]& []& []& []\\
		[tlbr]& [tlbr]& [tlbr]& []& []& []& []\\
		[tlbr]& [tlbr]& []& []& []& []& []
	}\]
	
	\caption{From left to right: $\la=(3,2)$, $\mu=(4,2,2,1)$, $\la\ast\mu=(7,5,5,4,3,2)/(3,3,3,3)$}
	\label{fig:laastmu}	
\end{figure}

Given finite nonempty sets $A,B\subset\N$, define $A\leq B$ if $\max(A)\leq\min(B)$, and $A<B$ if $\max(A)<\min(B)$.  A \textit{set-valued tableaux} is a labeling of the boxes in a Young diagram (or a skew diagram) with finite nonempty subsets of $\N$ such that the rows weakly increase from left to right and the columns strictly increase from top to bottom.  Note that a semistandard Young tableaux is a special case of a set-valued tableaux, where each of the sets used has exactly one element.  If $T$ is a set-valuex tableaux, we define $w(T)$ to be the sequence of the numbers in the boxes of $T$, read from bottom to top and then from left to right, where each box is read in increasing order.  Figure \ref{fig:setvaltab} provides an example.

\begin{figure}[]
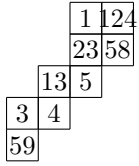

	\[ \tableau{12}
	{	[]& []& [tlbr]1& [tlbr]124\\
		[]& []& [tlbr]23& [tlbr]58\\
		[]& [tlbr]13& [tlbr]5& []\\
		[tlbr]3& [tlbr]4& []& []\\
		[lbr]59& []& []& []
	}\]
	\caption{A set-valued skew tableaux with word $(5,9,3,4,1,3,5,2,3,5,8,1,1,2,4)$}
	\label{fig:setvaltab}	
\end{figure}

A word $w=(w_1,w_2,...,w_k)$ satisfies the \textit{reverse lattice condition} if, for each $i$, either $w_i=1$ or $w_i$ is followed by more instances of $w_i-1$ than of $w_i$.  The \textit{content} of $w$ is the sequence $(c_1,...,c_r)$, where $c_i$ is the number of $i$'s in $w$.  Note that if $w$ satisfies the reverse lattice condition, the content of $w$ is a partition.

The classical K-theoretic Littlewood-Richardson rule is:

\begin{thm}[\cite{buch_gamma}]\label{thm:ktlr}
	Given classical shapes $\la,\mu,\nu$, the structure constant $N_{\la,\mu}^\nu$ of $\K(X)$ is equal to $(-1)^{|\nu|-|\la|-|\mu|}$ times the number of set-valued tableaux $T$ of shape $\la\ast\mu$ such that $w(T)$ is a reverse lattice word with content $\nu$.
\end{thm}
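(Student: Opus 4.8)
The plan is to prove this by lifting the computation out of $\K(X)$ and into the ring of symmetric functions, where structure sheaf classes are modeled by \emph{stable Grothendieck polynomials}. For a partition $\la$, set $G_\la=\sum_T(-1)^{|T|-|\la|}x^T$, where $T$ ranges over all set-valued tableaux of shape $\la$, $x^T$ records the multiset of entries, and $|T|$ is their total number. These $G_\la$ are symmetric (though inhomogeneous) power series whose lowest-degree part is the Schur function $s_\la$, and they form a topological basis of a completion $\wh\Gamma$ of the ring of symmetric functions. The first step is to identify $\K(\Gr(m,n))$ with the quotient of $\wh\Gamma$ by the span of those $G_\la$ with $\la$ not contained in the $m\times(n-m)$ rectangle, under which $\Oh^\la$ is the image of $G_\la$. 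Granting this, the theorem reduces to the purely combinatorial identity in $\wh\Gamma$
\[
	G_\la G_\mu=\sum\limits_\nu(-1)^{|\nu|-|\la|-|\mu|}\, c_{\la,\mu}^\nu\, G_\nu,
\]
where $c_{\la,\mu}^\nu$ is the number of set-valued tableaux of shape $\la\ast\mu$ whose reading word is a reverse lattice word of content $\nu$: for $\nu$ inside the rectangle the quotient structure constant equals the $\wh\Gamma$ one, while terms with $\nu$ leaving the rectangle map to $0$ and drop out, so they are irrelevant to the statement.

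The combinatorial core is this product identity, which I would attack in two stages. First, establish a Pieri rule in $\wh\Gamma$ for multiplication by the single-row and single-column classes $G_{(j)}$ and $G_{(1^i)}$; after passing to the quotient this must agree with Theorem~\ref{thm:pieri}, so it is already heavily constrained, and the set-valued count can be checked directly against it in the strip case. Second, bootstrap from Pieri to the general product. The honest route is to show that the bilinear operation defined by the counts $c_{\la,\mu}^\nu$ is \emph{associative}; since the single-box classes generate $\wh\Gamma$ topologically, associativity together with the Pieri rule forces these counts to be the true structure constants. Associativity is where the real content lives: it is equivalent to the confluence of a K-theoretic rectification, a set-valued jeu de taquin under which the reverse lattice word and its content $\nu$ are invariants. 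Alternatively, one can exploit the Hopf-algebra self-duality of $\wh\Gamma$, computing the coproduct $\Delta(G_\nu)$ as a sum of skew stable Grothendieck polynomials and reading the product constants off the coproduct constants via the canonical pairing, thereby trading associativity for coassociativity.

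A modest amount of bookkeeping then finishes the argument. I would check that the sign $(-1)^{|\nu|-|\la|-|\mu|}$ is exactly the homogeneous-degree defect between $G_\nu$ and the lowest-degree part of $G_\la G_\mu$, so it is forced rather than an extra input, and translate between the skew formulation (tableaux of shape $\nu/\la$ of content $\mu$) and the stacked formulation of the statement (tableaux of shape $\la\ast\mu$ of content $\nu$), using a reflection of the rule together with the symmetry $c_{\la,\mu}^\nu=c_{\mu,\la}^\nu$ inherited from commutativity of $\wh\Gamma$.

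The hard part will be the associativity/confluence step. In the cohomological Littlewood-Richardson rule, associativity of the tableau product is delivered cleanly by the plactic relations (equivalently, by confluence of ordinary jeu de taquin). In the K-theoretic setting the relevant monoid is the \emph{Hecke} monoid, carrying the idempotent relation $s_is_i=s_i$ alongside the braid relations, and the set-valued tableaux over-count in a way that is precisely cancelled by the alternating signs. The delicate point is to verify that a single elementary K-jeu-de-taquin move neither creates nor destroys reverse lattice words except in sign-cancelling pairs, and that rectification is independent of the order of moves; controlling this over-counting so that the content $\nu$ and the lattice condition survive is the step I expect to require the most care.
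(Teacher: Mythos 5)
You should first note that the paper does not prove Theorem \ref{thm:ktlr} at all: it is imported as a citation from \cite{buch_gamma}, so your proposal has to be measured against Buch's original argument rather than against anything in this text. Your overall frame --- realize $\Oh^\la$ as the image of the stable Grothendieck polynomial $G_\la$, identify $\K(X)$ with a quotient of a completion $\wh\Gamma$ of the span of the $G_\la$, and reduce the theorem to a product identity in that ring --- is indeed the architecture of \cite{buch_gamma}. The genuine gaps are at the two places you yourself flag as carrying the content. The statement you propose to verify, that K-theoretic jeu de taquin ``rectification is independent of the order of moves,'' is \emph{false}: unlike ordinary jeu de taquin, K-rectification genuinely depends on the order of slides. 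This failure of confluence is a known phenomenon (Thomas--Yong), and the known repair --- proving that rectification is well defined only when the target is a \emph{unique rectification target}, e.g.\ a superstandard tableau --- is itself the hard theorem, not something that can be checked move-by-move as you suggest; note also that K-jdt is native to increasing tableaux, so even stating your confluence claim for set-valued tableaux requires a nontrivial translation (Hecke insertion / uncrowding). Buch's own proof predates K-jdt and uses no confluence statement. Your fallback route collapses for a different reason: the Hopf algebra spanned by the $G_\la$ is \emph{not} self-dual. Its graded dual is spanned by the dual stable Grothendieck polynomials $g_\la$ (Lam--Pylyavskyy), so product constants in the $G$-basis are coproduct constants in the $g$-basis, not in the $G$-basis; the mismatch is visible already in degrees, since nonzero product constants require $|\nu|\geq|\la|+|\mu|$, while nonzero $G$-coproduct constants require $|\nu|\leq|\la|+|\mu|$.

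Two further points. The identification of $\K(\Gr(m,n))$ with the quotient of $\wh\Gamma$ by the span of the out-of-rectangle $G_\la$ --- in particular, that this span is an ideal and that $G_\la\mapsto\Oh^\la$ is a ring homomorphism --- is a substantive theorem with geometric content, which you simply grant; that is acceptable modularization, but it is an input of comparable depth to what you are trying to prove. On the positive side, your bootstrap scheme is sound: a bilinear, associative, continuous product agreeing with the true one against all row classes $G_{(j)}$ (Lenart's Pieri rule, \cite{Lenart_option1}) must equal the true product, because the rows $G_{(j)}$ --- not ``single-box classes,'' as you write; $G_{(1)}$ alone generates nothing like enough --- topologically generate $\wh\Gamma$, just as the $h_j$ generate the ring of symmetric functions. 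So the skeleton is viable, but what is missing is precisely a workable proof of the associativity it rests on, and the one you propose cannot succeed as stated.
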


As a comparison to $c(t,a,b)$, let $\nu$ be the quantum shape in $\Z^2/(t,-t)$ given by $(t,t,t-1,t-2,...,3,2)$.  In other words, $\nu$ is $\rho_t[1]$ with one box removed.  By Theorem \ref{thm:main0}, $N_{\rho_t,\hkab}^\nu$ can be computed using the ordinary K-theoretic Littlewood-Richardson rule.

\begin{prop}
	In $\QK(\Gr(t,2t))$, for $\nu=(t,t,t-1,t-2,...,3,2)$, $N_{\rho_t,\hkab}^\nu=(-1)^{a+b}\binom{t-2}{a-1}\binom{t-2}{b-1}$
\end{prop}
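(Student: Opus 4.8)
The plan is to recognize $N_{\rho_t,\hkab}^\nu$ as a genuine \emph{classical} K-theory structure constant and then evaluate it with the Littlewood--Richardson rule of \Theorem{ktlr}. First I would record that $\nu=(t,t,t-1,\dots,2)$ is exactly $\rho_t[1]$ with its unique row-$1$ corner box $(1,t+1)$ removed, so that $\rho_t\subsetneq\nu\subsetneq\rho_t[1]$ and $\nu/\rho_t$ is a broken rim. Both $\rho_t$ and $\nu$ are already classical shapes (the box $(t,0)$ lies in each, while $(t+1,1)\equiv(1,t+1)$ lies in neither), so no translation is needed: by \Theorem{main0quantum} the quantum constant $N_{\rho_t,\hkab}^\nu$ literally equals the corresponding structure constant of $\K(\Gr(t,2t))$, to which \Theorem{ktlr} applies directly.

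Next I would pin down the sign. Summing the parts gives $|\rho_t|=\binom{t}{2}$ and $|\nu|=t+\sum_{k=2}^t k=\binom{t}{2}+2t-1$, so $|\nu|-|\rho_t|=2t-1$, while $|\hkab|=a+b+1$. Hence the prefactor in \Theorem{ktlr} is
\[
(-1)^{|\nu|-|\rho_t|-|\hkab|}=(-1)^{2t-a-b-2}=(-1)^{a+b}.
\]
It therefore remains to prove that the number of set-valued tableaux $T$ of skew shape $\rho_t\ast\hkab$ whose reading word is reverse-lattice of content $\nu$ is exactly $\binom{t-2}{a-1}\binom{t-2}{b-1}$.

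For the count I would first extract the forced skeleton. Reading bottom-to-top then left-to-right, the final letter of $w(T)$ is the largest entry of the top-right box; the reverse-lattice condition forces it to be $1$, and weak increase along the top (arm) row of $\hkab$ then forces that entire row to be $\{1\}$, contributing $b+1$ of the $t$ required $1$'s. Column strictness forces the leg column of $\hkab$ to begin $\{1\}<\{2\}<\cdots$, and, since the only other boxes that may contain $1$ are the top boxes of the $\rho_t$-columns, the remaining $t-b-1$ ones are pinned to a prefix of the top row of $\rho_t$. Iterating this analysis value by value, I expect the single-entry skeleton of $T$ to be completely determined, so that the only freedom is in the placement of the $|\nu|-\#\{\text{boxes}\}=(t-1-a)+(t-1-b)$ \emph{extra} entries that make $T$ genuinely set-valued. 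The product form of the answer reflects that these extra entries should split into two independent families — $t-1-b$ optional entries distributed among $t-2$ admissible slots in the arm direction and $t-1-a$ among $t-2$ slots in the leg direction — giving $\binom{t-2}{t-1-b}\binom{t-2}{t-1-a}=\binom{t-2}{b-1}\binom{t-2}{a-1}$.

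The main obstacle is precisely this last step: proving that the skeleton is forced and, more delicately, identifying the exact set of boxes at which an optional extra entry may be inserted without violating semistandardness or the reverse-lattice condition, and showing the arm and leg choices are genuinely independent. I would verify the bookkeeping on small cases (e.g. $t=3$, where every $\binom{1}{\cdot}$ is $0$ or $1$ and the valid tableau is unique whenever the coefficient is nonzero), and, as an independent check, recompute the coefficient directly from \Lemma{lincombo} and the Pieri rules of \Theorem{pieri}: since $\nu/\rho_t$ is neither a horizontal nor a vertical strip, only the $\Oh^{\rho_t}\Oh^{1^i}\Oh^j$ terms contribute, and summing the two-step Pieri coefficients over the splittings of the rim $\nu/\rho_t$ into a vertical strip followed by a horizontal strip should reproduce the same value.
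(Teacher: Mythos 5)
Your reduction to classical K-theory and your sign computation are both correct and agree with the paper (which also evaluates this coefficient via the rule of Theorem \ref{thm:ktlr}; since $\rho_t$ and $\nu$ are both classical shapes, $N_{\rho_t,\hkab}^{\nu}=N_{\rho_t,\hkab}^{\nu,0}$ is classical essentially by definition, so even the appeal to Theorem \ref{thm:main0quantum} is more than is needed). The problem is that everything after that is a plan rather than a proof: the entire content of the proposition is the claim that the number of reverse-lattice set-valued tableaux is $\binom{t-2}{a-1}\binom{t-2}{b-1}$, and you explicitly defer exactly this step --- ``I expect the single-entry skeleton of $T$ to be completely determined,'' with the forcing of the skeleton, the identification of admissible slots, and the independence of the two families all flagged as ``the main obstacle.'' Since that enumeration \emph{is} the proposition, this is a genuine gap, not a sketch of a complete argument.

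Moreover, the orientation you chose makes that gap harder to close than it needs to be. You work with $\rho_t\ast\hkab$, so the hook is read \emph{last}; the reverse-lattice condition then forces the arm to be all $\{1\}$'s, and the remaining content spills into the staircase region. Concretely, for $t=3$, $a=b=1$ the unique valid tableau in your orientation is: arm $\{1\},\{1\}$; leg box $\{2\}$; $\rho_3$ filled $\{1\},\{2,3\}$ in its top row and $\{2,3\}$ below --- both ``extra'' entries sit in $\rho_3$, not in the hook, so your picture of $t-2$ slots ``in the arm direction'' and $t-2$ ``in the leg direction'' inside the hook does not describe the actual tableaux (the numbers must agree by commutativity, but the structure you posit is not where the freedom lives). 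The paper avoids this by using the opposite orientation $\hkab\ast\rho_t$, i.e.\ $N_{\rho_t,\hkab}^{\nu}=N_{\hkab,\rho_t}^{\nu}$ with the staircase on top and read last. There the reverse-lattice condition forces \emph{all} of $\rho_t$ to be singletons with row $i$ filled by $i$, so the hook must carry content $(1,2,2,\dots,2)$; the corner is forced to be $\{1\}$, each value $2,\dots,t$ occurs exactly once in the arm and once in the leg (a value repeated in the arm would violate the lattice condition, and column-strictness handles the leg), and the count becomes two manifestly independent interval-partitions of $\{2,\dots,t\}$, giving $\binom{t-2}{a-1}\binom{t-2}{b-1}$ by stars and bars. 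If you switch orientations, your outline becomes the paper's proof; as written, the key count is unestablished and your proposed slot structure is not the right description of it.
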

\begin{proof}
	We need to count the number of ways to create a set-valued tableaux with shape $\hkab\ast\rho_t$ and content $\nu$, subject to the reverse lattice condition.  The reverse lattice condition forces each row of $\rho_t$ to be filled with exactly its row number, since
	the furthest rightmost box in the top row must have a 1, which forces the boxes to
	the left of it to also have 1s, and the rest of the rows are filled similarly. This is
	demonstrated in Figure \ref{fig:fillrhot}.
	
	\begin{figure}[]
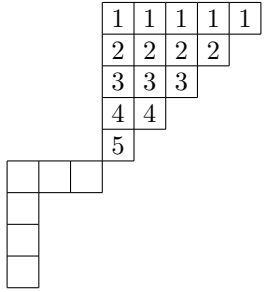

		\[ \tableau{12}
		{	[]& []& []& [tlbr]1& [tbr]1& [tbr]1& [tbr]1& [tbr]1\\
			[]& []& []& [lbr]2& [br]2& [br]2& [br]2& []\\
			[]& []& []& [lbr]3& [br]3& [br]3& []& []\\
			[]& []& []& [lbr]4& [br]4& []& []& []\\
			[]& []& []& [lbr]5& []& []& []& []\\
			[tlbr]& [trb]& [tbr]& []& []& []& []& []\\
			[lbr]& []& []& []& []& []& []& []\\
			[lbr]& []& []& []& []& []& []& []\\
			[lbr]& []& []& []& []& []& []& []
		}\]
		\caption{Any set-valued tableaux $T$ of shape $(3\backslash 2)\ast\rho_5$ with $w(T)$ a reverse lattice word must have $\rho_5$ filled with singletons as above.}
		\label{fig:fillrhot}	
	\end{figure}

	Now we need to fill the $\hkab$ section of the tableaux with content $(1,2,2,...,2)$.  The reverse lattice condition implies that no number can be contained in more than one box in the top row, and the column must strictly increase, so each of the numbers $2$ through $t$ must be in one box in the row of the hook and one box in the column of the hook.  This forces the corner to be assigned precisely $\{1\}$.  Then by stars and bars there are $\binom{t-2}{a-1}$ ways to fill the column and $\binom{t-2}{b-1}$ ways to fill the row.  The column and the row are independent of each other, so there are $\binom{t-2}{a-1}\binom{t-2}{b-1}$ set valued tableaux with shape $\hkab\ast\rho_t$ and content $\nu$.
\end{proof}

Notably, $\binom{t-2}{a-1}\binom{t-2}{b-1}$ is the highest degree term of the sum for $c(t,a,b)$.  Other terms are smaller, and so can be considered as a subset of the tableaux counted above.  The precise form of the terms suggests a combinatorial description of $c(t,a,b)$.  There are many variants on this description that work equally well, but the version in Proposition \ref{prop:combodesc} below is the easiest to state.

\begin{prop}\label{prop:combodesc}
	$c(t,a,b)$ is equal to $(-1)^{a+b+1}$ times the number of pairs $(i,T)$, where $i\in\N$, $T$ is a set-valued tableaux of content $(t,t,t-1,t-2,...,3,2)$ and shape $\hkab\ast\rho_t$, $w(T)$ is a reverse lattice word, and for each $j\leq i$, every box which contains $j$ contains precisely the singleton set $\{j\}$.
\end{prop}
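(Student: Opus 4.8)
The plan is to refine the counting argument of the proposition immediately preceding this one, which already identifies the structure of the relevant tableaux. That proposition shows that every set-valued tableau $T$ of shape $\hkab\ast\rho_t$ and content $(t,t,t-1,\dots,3,2)$ whose word is a reverse lattice word has a completely rigid $\rho_t$-part (row $r$ filled with the singleton $\{r\}$) and a hook part whose corner is $\{1\}$, while the $a$ boxes of the leg and the $b$ boxes of the arm are filled, independently, by splitting the ordered alphabet $2<3<\dots<t$ into $a$, respectively $b$, nonempty contiguous blocks. Rather than summing over $T$ directly, I would count the pairs $(i,T)$ by fixing $i$ first: writing $B(i)$ for the number of such tableaux $T$ in which each of the values $1,\dots,i$ occurs only in singleton boxes, the quantity to be computed is $\sum_{i\ge 1}B(i)$.

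To evaluate $B(i)$ I would translate the singleton condition into a condition on the two block partitions of $2<3<\dots<t$. Since the value $1$ always sits in singleton boxes (the corner and the top row of $\rho_t$), the requirement for $1,\dots,i$ reduces to demanding that each of $2,\dots,i$ be its own block in both the arm and the leg; this forces cuts precisely at the $i-1$ gaps immediately to the right of $2,3,\dots,i$. A leg split into $a$ blocks needs $a-1$ cuts in all, leaving $a-i$ free cuts to be chosen among the remaining $t-1-i$ gaps, and likewise for the arm. Stars and bars therefore gives
\[
B(i)=\binom{t-1-i}{a-i}\binom{t-1-i}{b-i},
\]
valid for every $i\ge 1$; the case $i=1$ reproduces the total count $\binom{t-2}{a-1}\binom{t-2}{b-1}$ of the preceding proposition, as it must, since $1$ is automatically a singleton.

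Finally I would sum. The factor $\binom{t-1-i}{a-i}$ vanishes once $i>a$ and $\binom{t-1-i}{b-i}$ vanishes once $i>b$, so $B(i)=0$ for $i>\min(a,b)$ and the sum over all positive $i$ is finite:
\[
\sum_{i\ge 1}B(i)=\sum_{i=1}^{\min(a,b)}\binom{t-1-i}{a-i}\binom{t-1-i}{b-i}.
\]
Comparing with \thm{main2} then yields $c(t,a,b)=(-1)^{a+b+1}\sum_{i\ge 1}B(i)$, which is exactly the claim. The main obstacle is bookkeeping rather than algebra: one must check that the legal fillings of the arm and the leg are exactly the contiguous block partitions, so that the combinatorial model of the preceding proposition can be reused verbatim; that forcing $2,\dots,i$ to be singletons corresponds precisely to the $i-1$ prescribed cuts, with no hidden interaction between arm and leg; and that $i$ is taken to range over the positive integers, since admitting $i=0$ would contribute a spurious extra copy of $B(1)=B(0)$ and break the identity.
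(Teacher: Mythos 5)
Your proposal is correct and follows essentially the same route as the paper's proof: both reuse the forced structure from the preceding proposition (rigid $\rho_t$-part and corner $\{1\}$), observe that requiring $2,\dots,i$ to be singletons pins down the initial boxes of the arm and leg, apply stars and bars to get $\binom{t-1-i}{a-i}\binom{t-1-i}{b-i}$ for each fixed $i$, and sum against the formula of Theorem \ref{thm:main2}. Your extra bookkeeping (the explicit cut-counting, the check that $B(1)$ recovers the preceding proposition's count, and the vanishing for $i>\min(a,b)$) is a careful elaboration of the same argument rather than a different one.
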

\begin{proof}
	As previously established, every set in the $\rho_t$ section of the tableaux is a singleton, as is the $\{1\}$ in the corner of the $\hkab$ section.  If every instance of each $j\leq i$ needs to be a singleton, this entirely determines the sets in the first $i$ boxes in the column and the row of the hook.  Again by stars and bars, there are $\binom{t-1-i}{a-i}$ ways to fill the rest of the column and $\binom{t-1-i}{b-i}$ ways to fill the rest of the row.  Thus the total number of pairs $(i,T)$ is exactly $\sum\limits_{i=1}^{\min(a,b)}\binom{t-1-i}{a-i}\binom{t-1-i}{b-i}=c(t,a,b)$, as desired.
\end{proof}

\bibliographystyle{amsplain}
\bibliography{bib.bib}
\end{document}